\newtheorem{definition}{Definition}[section]
\newtheorem{theorem}[definition]{Theorem}
\newtheorem{lemma}[definition]{Lemma}
\newtheorem{corollary}[definition]{Corollary}
\newcommand{\nempty}{\not=\emptyset}
\newcommand{\B}[1]{\ensuremath{\mathbb{#1}}}
\newcommand{\N}{\B{N}}
\newcommand{\Z}{\B{Z}}
\newcommand{\R}{\B{R}}
\newcommand{\C}[1]{\mathcal{#1}}
\newcommand{\ilim}[1]{\ensuremath{\lim\limits_{\leftarrow}\{[0,1],
#1\}}}
\newcommand{\nin}{\not\in}
\renewcommand{\include}{\input}
\newcommand{\w}{\omega}
\renewcommand{\include}{\input}
\newcommand{\sh}{\sigma}
\newcommand{\eps}{\epsilon}
\newcommand{\ba}[1]{\bar{#1}}
\newcommand{\bn}[2]{\bar{#1}^{#2}}
\begin{document}

\title[Inverse limit isotopies]{Homeomorphisms of unimodal inverse limit spaces with a non-recurrent postcritical point}

\author[L. Block]{Louis Block}
\address{Department of Mathematics, University of Florida}

\author[J. Keesling]{James Keesling}
\address{Department of Mathematics, University of Florida}

\author[B.E. Raines]{Brian Raines}
\address{Department of Mathematics, Baylor University, Waco, TX
76798--7328,USA} \email{brian\_raines@baylor.edu}

\author[S. \v Stimac]{Sonja \v Stimac}
\address{ Graduate School of Economics and Business, University of Zagreb, Kennedyev trg 6, 10000 Zagreb, Croatia}
\email{sonja@math.hr}%\footnotetext{Supported in part by NSF
%0604958 and in part by the MZOS Grant 037-0372791-2802 of the
%Republic of Croatia}

\subjclass[2000]{37B45, 37E05, 54F15, 54H20} \keywords{attractor,
invariant set, inverse limits, unimodal, continuum, indecomposable}
\maketitle

\begin{abstract}
In this paper we show that the group of automorphisms of a
non-recurrent tent map inverse limit is very simple by demonstrating that every
homeomorphism of such a space is isotopic to a power of the induced
shift homeomorphism. \textbf{Note:  This paper appeared in \emph{Topology and its Applications} 156 (2009), no. 15, 2417-2425}
\end{abstract}

\section{Introduction}

In the last fifteen years inverse limits of unimodal maps have been
studied extensively.  One of the main problems in the field of study
is to classify all such spaces based upon the dynamics of the
particular unimodal map that generates the inverse limit space.
There are many known topological invariants in this class of spaces
such as endpoints, \cite{Barge-Martin-1995}, \cite{Bruin-1999},
folding points, \cite{Bruin-2000}, \cite{Good-Knight-Raines-2006},
\cite{Raines-2004}, asymptotic arc components, \cite{Bruin-2005},
and complicated subcontinua, \cite{Barge-et-al-1996},
\cite{Brucks-Bruin-1999}, \cite{Bruin-2007}. The main conjecture is
due to W.T. Ingram:
\begin{center} \textbf{Ingram's Conjecture:}
\end{center}
\begin{quote}
Let $T_s$ and $T_t$ be tent maps with slopes $s$ and $t$
respectively.  Then $\ilim{T_s}$ is homeomorphic with $\ilim{T_t}$
if and only if $s=t$
\end{quote}

Ingram's conjecture has been proved in many special cases.  If $T_s$
is a tent map with a periodic critical point of period $n$ and $T_t$
is a tent map with a periodic critical point of period $n'$ then
Barge and Martin proved that $\ilim{T_s}$ has $n$ endpoints and
$\ilim{T_t}$ has $n'$ endpoints, \cite{Barge-Martin-1995}.  Hence if
$n\neq n'$ then $\ilim{T_s}$ is not homeomorphic with $\ilim{T_t}$.
Bruin extended this by introducing the notion of folding points, and
showing that if $T_s$ and $T_t$ have preperiodic critical points of
order $n$ and $n'$ respectively then $\ilim{T_s}$ and $\ilim{T_t}$
have $n$ and $n'$ many folding points respectively,
\cite{Bruin-2000}. Hence if $n\neq n'$ then the associated inverse
limit spaces are not homeomorphic. Barge and Diamond proved the
conjecture in the case that $T_s$ is one of the three tent maps with
a periodic critical point of period 5, \cite{Barge-Diamond-1995}.
In a series of papers Kailhofer proved Ingram's conjecture in the
case that the periodic point is periodic, \cite{Kailhofer-2002},
\cite{Kailhofer-2003} (see \cite{blockjagkailkees} which Kailhofer
wrote with Block, Jakimovik, and Keesling for a particularly
readable account of her proof.) Independently \v{S}timac proved in
her dissertation research that Ingram's conjecture holds in the case
that the critical point is periodic, \cite{Stimac-thesis-2002}, and
then she extended her work to the case that the critical point is
preperiodic, \cite{Stimac-2007}. Recently Raines and \v{S}timac have
proved the Ingram conjecture in the case that the critical point is
non-recurrent, \cite{rainesstimac:nr}.

The focus of this paper is the case that the inverse limit is
induced by a tent map with a non-recurrent critical point.  Here we
consider the structure of the group of automorphisms on such a
space, and we prove that this group is isomorphic to $\Z$ by
showing:\begin{quote}   \textbf{Main Theorem:}  Let $T$ be a tent
map with a non-recurrent critical point.  Let $h:\ilim{T}\to
\ilim{T}$ be a homeomorphism. Then there is an integer $k$ such that
$h$ is isotopic to $\sh^k$.
\end{quote} By $\sh$ we mean the inverse of the natural shift homeomorphism
$$\sh(x_0, x_1\dots)=(T(x_0), x_0, x_1, x_2\dots)$$ on $\ilim{T}$.

This extends recent work by Block, Jagimovik, Kailhofer and Keesling
who proved this result in the case that the critical point is
periodic, \cite{blockjagkailkees}, and in this paper we adopt many
of their techniques.  The main difference between the periodic case
and the non-recurrent case is that in the periodic case there are
only finitely many folding points (points $\ba x\in \ilim{T}$ with
the property that there is no neighborhood of $\ba x$ homeomorphic
to the product of a zero-dimensional set and an arc), and all of
these folding points are in fact endpoints.  In the case we
consider, the non-recurrent case, there are no endpoints, but we
have (perhaps uncountably many) folding points.  These folding
points present the main difficulty in proving our result.

\section{Definitions and Preliminary Lemmas}

Let $T=T_s:[0,1]\to [0,1]$ be a tent-map with slope, $s$, between
$\sqrt{2}$ and $2$ such that the critical point of $T$, $1/2$, is
non-recurrent.  Let $C_0$ denote the composant of the single
endpoint, $(0,0,0,\dots )$, of $\ilim{T}=K$.  Suppose that
$$h:K\to K$$ is a homeomorphism.  Then we have $h(C_0)=C_0$.

Let $p\in \Z_+$.  Define the point $\ba x\in K$ to be a
\emph{$p$-point} if $\pi_n(\ba x)=1/2$ for some $n>p$.  Let $E_p$ be
the set of all $p$-points in $K$.

Let $S\in \N$ be large enough to satisfy the conditions from
\cite{rainesstimac:nr}.  These conditions are quite technical and
will mostly not be important in this paper.  A few of the
implications, however, of $n\ge S$ will be important, and we mention
them below. We write $\C C\prec \C D$ if the chaining $\C C$ refines
the chaining $\C D$, and we define the \emph{mesh} of $\C C$ to be
the largest diameter of any of its links. In \cite{rainesstimac:nr},
we construct a sequence of chainings of $K$, $\{\C C_{k,r}\}_{k\in
\Z_+, r\ge S}$.  Let $k\in \Z_+$ and $r\ge S$.  Let $\C P$ be the
partition of $[0, T(1/2)]$ induced by the collection of points
$$\bigcup_{j=0}^{k+r+1} T^{-j}(1/2)$$  Let $t=|\C P|$, and suppose that $\C P=\{x_1<x_2\cdots <x_t\}$.  Define the
following open cover of $[0, T(1/2)]$, $$I^1_{k,r}=[0, x_2)$$
$$I^j_{k,r}=(x_{j-1}, x_{j+1})$$ for $1<j<t$ and
$$I^t_{k,r}=(x_{t-1}, T(1/2)]$$  Let
$$\ell^j_{k,r}=\pi_k^{-1}(I^j_{k,r})$$ for $1\le j\le t$.  Then it
is not hard to see that $$\C C_{k,r}=\{\ell^j_{k,r}|1\le j\le t\}$$
is a chaining of $K$ with open sets.

We show in \cite{rainesstimac:nr} that this collection of chains
satisfies: \begin{enumerate}\item $\C C_{q,m}\prec \C C_{p,n}$
provided $q\ge p$ and $m\ge n$ (\cite[Lemma 2.7]{rainesstimac:nr});
\item the mesh of $\C C_{q,m}$ goes to zero as $q,m\to \infty$;
\item each $p$-point, $\ba x$, is contained in a a
link of $\C C_{p,n}$, $\ell^x_{p,n}$, such that if $A$ is the arc
component of $\ell^x_{p,n}$ that contains $\ba x$ then $A\cap
E_p=\{\ba x\}$(\cite[Lemma 2.8]{rainesstimac:nr}).\end{enumerate}
Let $p$ and $n$ be given (with $n\ge S$), and choose $q> p$ and $m>
n$ such that
$$h(\C C_{q,m})\prec \C C_{p,n}$$ Let $\ba x\in E_q\cap C_0$.  We
showed that if $A$ is the arc component of a link of $\C C_{p,n}$
which contains $h(\ba x)$ then there is a unique $p$-point, $\ba
z$, in $A$ (\cite[Lemma 3.1]{rainesstimac:nr}.  We defined an
`adjusted' map, $h_{q,p}$, which maps $\ba x$ to this $p$-point,
$\ba z$. Then we extended $h_{q,p}$ in a natural monotonic way on
the arcs between adjacent $q-$points in $C_0$.

We showed in \cite{rainesstimac:nr} that there is a $q,p\in \Z_+$
and $m,n\ge S$ such that $$h(\C C_{q,m})\prec \C C_{p,n}$$ and for
$b=p-q$ we have that $$h_{q,p}|_{E_q\cap C_0}=\sh^{-b}|_{E_q\cap
C_0}$$

Let $F\subseteq K$ be the set of folding points for $K$, i.e. $\ba
x\in F$ if, and only if $\pi_n(\ba x)=x_n\in \w(1/2)$ for all $n\in
\N$ (equivalently $\ba x$ is a limit point of a sequence $\bn{y}{n}$
such that $\bn{y}{n}\in E_n$). In \cite{Raines-2004} we show that
$\ba x\in F$ if and only if every neighborhood of $\ba x$ is not
homeomorphic to the product of a zero-dimensional set and an open
arc.

\begin{lemma}  Let $\ba x\in F$ then $h(\ba x)=\sh^{-b}(\ba x)$.
\end{lemma}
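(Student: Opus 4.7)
The plan is to approximate the folding point $\ba x$ by a sequence of $q$-points lying in the composant $C_0$, apply the identity $h_{q,p}|_{E_q\cap C_0}=\sh^{-b}|_{E_q\cap C_0}$ to these approximants, and then pass to the limit using continuity of $h$ and $\sh^{-b}$ together with the fact (item 2 of the listed properties) that the mesh of $\C C_{p,n}$ tends to $0$ as $p,n\to\infty$.

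First I would verify the approximation step: for every sufficiently large $q \in \N$, the set $E_q \cap C_0$ is dense in $K$. Since $K$ is indecomposable, $C_0$ is already dense in $K$. Moreover, the $q$-points are the preimages under $\pi_n$ (for $n > q$) of the critical point $1/2$, and these appear densely along the arcs of $C_0$ once $n$ is chosen large, because every nondegenerate arc of $C_0$ eventually maps (under some $\pi_n$) onto an interval meeting $1/2$. This yields a sequence $\ba{y}_k \in E_{q_k} \cap C_0$ with $q_k \to \infty$ and $\ba{y}_k \to \ba x$. For each $k$, I would set $p_k = q_k + b$ and select $m_k, n_k \ge S$ large enough that $h(\C C_{q_k, m_k}) \prec \C C_{p_k, n_k}$ and $\mesh(\C C_{p_k, n_k}) < 1/k$, using that the value $b=p-q$ produced in \cite{rainesstimac:nr} is independent of the choice of sufficiently large $(q,p)$.

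By the construction of the adjusted map $h_{q_k,p_k}$, the images $h(\ba{y}_k)$ and $h_{q_k, p_k}(\ba{y}_k) = \sh^{-b}(\ba{y}_k)$ lie in the same arc component of a single link of $\C C_{p_k, n_k}$, and hence $d(h(\ba{y}_k), \sh^{-b}(\ba{y}_k))\le \mesh(\C C_{p_k,n_k}) \to 0$. Combining this with $h(\ba{y}_k) \to h(\ba x)$ (continuity of $h$) and $\sh^{-b}(\ba{y}_k) \to \sh^{-b}(\ba x)$ (continuity of $\sh^{-b}$) forces $h(\ba x) = \sh^{-b}(\ba x)$. The main obstacle I anticipate is the density step: rigorously justifying that a folding point $\ba x$, which may itself lie outside $C_0$, is approximable by $q$-points of $C_0$ for arbitrarily large $q$. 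This hinges on the arc structure of $C_0$ and how its arcs accumulate on $F$; everything after this step is a routine passage to the limit.
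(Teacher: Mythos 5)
Your argument is, in outline, exactly the paper's proof: approximate $\ba x$ by points $\bn{z}{j}\in E_{q_j}\cap C_0$ taken from a refining sequence of chains, use $h_{q_j,p_j}|_{E_{q_j}\cap C_0}=\sh^{-b}|_{E_{q_j}\cap C_0}$ to place $h(\bn{z}{j})$ and $\sh^{-b}(\bn{z}{j})$ in a common link of $\C C_{p_j,n_j}$, and let the mesh tend to zero; like you, the paper takes for granted that the same exponent $b$ works at every stage.

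The problem is the justification you offer for the approximation step, which you rightly flag as the crux. The set $E_q\cap C_0$ is \emph{not} dense in $K$: if $\ba z\in E_q$ then $\pi_N(\ba z)=1/2$ for some $N>q$, hence $\pi_q(\ba z)=T^{N-q}(1/2)$, so $\pi_q(E_q)\subseteq \orb(T(1/2))$. When $1/2$ is preperiodic (a special case of non-recurrent), this image is a finite set, so $\ov{E_q}$ is nowhere dense in $K$; in general $E_q$ can accumulate only on points all of whose coordinates lie in $\ov{\orb(1/2)}$, i.e. essentially on $E_q\cup F$. For the same reason it is false that $q$-points appear densely along arcs of $C_0$: property (3) of the chains says that each $p$-point is isolated in $E_p$ within its arc component of a link, so $p$-points are discretely spread along any arc and an arc disjoint from $F$ picks up no new ones as the index grows. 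What actually makes the approximation work is precisely the hypothesis $\ba x\in F$: then $\pi_q(\ba x)\in\w(1/2)$, so the open interval $I$ with $\pi_q^{-1}(I)$ the link of $\C C_{q,m}$ containing $\ba x$ must contain $T^{N-q}(1/2)$ for some $N>q$; since $\pi_N(C_0)$ contains $1/2$, that link contains a point of $E_q\cap C_0$. With the false density claim replaced by this folding-point argument (this is exactly what the paper's characterization of $F$ as limits of $n$-points with $n\to\infty$ is for), your proof goes through and coincides with the paper's.
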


\begin{proof}
 Let $\ba x\in F$, and let $\ba y=h(\ba x)$.  Since $h$ is a homeomorphism, $\ba y\in F$.  Let $p_1, q_1\in
 \Z_+$ and $m_1, n_1\ge S$ such that $$h(\C C_{q_1, m_1})\prec \C
 C_{p_1, n_1}\prec h(\C C_{q, m})\prec \C C_{q,n}$$ and recursively
 define $p_j, q_j\in \Z_+$ and $m_j, n_j\ge S$ such that $$h(\C C_{q_j, m_j})\prec \C
 C_{p_j, n_j}\prec h(\C C_{q_{j-1}, m_{j-1}})\prec \C
 C_{p_{j-1},n_{j-1}}$$  For each $j\in \N$, let $\ell^x_{q_j, m_j}$
 be a link of $\C C_{q_j,m_j}$ which contains $\ba x$ and let
 $\ell^y_{p_j, n_j}$ be a link of $\C C_{p_j, n_j}$ which contains
 $h(\ell^x_{q_j,m_j})$.  Define $\bn{z}{j}\in E_{q_j}\cap C_0$ such that $\bn{z}{j}\in
 \ell^x_{q_j, m_j}$.  Then we must have:\begin{enumerate}
   \item $\bn{z}{j}\to \ba x$ as $j\to \infty$;
   \item $h(\bn{z}{j})\in \ell^y_{p_j,n_j}$ and hence
   $\sh^{-b}(\bn{z}{j})=h_{q_j,p_j}(\bn{z}{j})\in \ell^y_{p_j, n_j}$;
   \item since the mesh of $\C C_{p_j,n_j}$ goes to zero, $\sh^{-b}(\bn{z}{j})=h_{q_j,p_j}(\bn{z}{j})\to \ba y$.
 \end{enumerate}  Thus $$h(\ba x)=\sh^{-b}(\ba x)$$
\end{proof}

Following \cite{blockjagkailkees}, we define the following
`composant-metric' $\ba d$.  Let $C$ be some composant of $K$ and
let $\ba x, \ba y\in C$.  Choose $n\in \N$ to be large enough such
that if $A$ is the arc with endpoints $\ba x$ and $\ba y$ in $C$
then $\pi_n|_{A}$ is a homeomorphism.  Then define $$\ba d(\ba x,
\ba y)=s^n|\pi_n(\ba x)-\pi_n(\ba y)|$$ (recall that $s$ is the
slope of the tent map we are using as the bonding map for $K$).
Notice that for all $m\ge n$ we have $$\ba d(\ba x, \ba
y)=s^m|\pi_m(\ba x)-\pi_m(\ba y)|$$

Let $\{D_i\}_{i\in \N}$ be a sequence of compact sets in some
compact metric space $Y$.  Define $$\limsup\{D_i\}=\{y\in Y|\text{
for some subsequence }\{D_{i_j}\}\text{ and }y_{i_j}\in D_{i_j},
y_{i_j}\to y\}$$

The next three lemmas are based upon lemmas from
\cite{blockjagkailkees} and are used throughout this paper.  The
proofs are virtually identical. We only change `end point' to
`folding point.'

\begin{lemma}\cite[5.1]{blockjagkailkees}\label{lemma5.1}
    Suppose that $A$ is an arc in $K$ not containing a folding point
    of $K$.  Then there is a neighborhood $V$ of $A$ homeomorphic to
    $C\times I$ where $C$ is a zero-dimensional set.  The boundary
    of $V$ corresponds to $C\times \{0,1\}$.  Moreover there is a
    positive integer $m$ such that $\pi_m|_B$ is a homeomorphism for
    each arc $B$ in $V$, and each component of
    $V$ has the same $\ba d$ length.
\end{lemma}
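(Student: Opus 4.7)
The plan follows the argument of \cite[Lemma 5.1]{blockjagkailkees} with `endpoint' replaced by `folding point' throughout, using the characterization from \cite{Raines-2004} that $\ba x \in F$ if and only if no neighborhood of $\ba x$ has the form (zero-dimensional)$\times$arc. Since $F$ is closed and $A$ is compact with $A\cap F=\emptyset$, fix $\eta>0$ with $d(A,F)>\eta$. Using property (2) of the chainings, choose $k\in\Z_+$ and $r\ge S$ large enough that $\mesh(\C C_{k,r})<\eta/3$. Let $V$ denote the union of the finitely many links of $\C C_{k,r}$ that meet $A$; by the mesh condition $V$ lies in the $\eta/2$-neighborhood of $A$ and in particular avoids $F$.

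The second step is to exhibit the local product structure of a single link. Each link $\ell^j_{k,r}=\pi_k^{-1}(I^j_{k,r})$ projects into the short interval $I^j_{k,r}$, and because the link avoids folding points, the chaining construction together with the characterization of \cite{Raines-2004} implies that each arc component of $\ell^j_{k,r}$ is an arc on which $\pi_k$ is a homeomorphism onto $I^j_{k,r}$, while the set of arc components is zero-dimensional (it is parameterized by $\pi_k^{-1}(y)$ for any interior $y\in I^j_{k,r}$ that is not a turning point). This yields a local product $\ell^j_{k,r}\cong C_j\times I^j_{k,r}$, with the topological boundary of the link in $K$ corresponding to $C_j\times\partial I^j_{k,r}$.

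To globalize the product structure, enumerate the links making up $V$ in the order in which they meet $A$. Two adjacent links $\ell^j_{k,r}$ and $\ell^{j+1}_{k,r}$ overlap in $\pi_k^{-1}(I^j_{k,r}\cap I^{j+1}_{k,r})$, and $\pi_k$ matches the arc components of the two links bijectively across this common subinterval. Iterating along the finite chain of links covering $A$ produces a global homeomorphism $V\cong C\times\pi_k(V)$ with $C$ zero-dimensional and $\pi_k(V)$ an arc in $[0,T(1/2)]$, and then $\partial V$ corresponds to $C\times\{0,1\}$.

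Finally, setting $m=k$, $\pi_m$ restricted to each arc component $B$ of $V$ is a homeomorphism onto $\pi_k(V)$, so by the definition of $\ba d$ each such $B$ has $\ba d$-length equal to $s^m\cdot|\pi_k(V)|$, which is independent of $B$. The main obstacle is the gluing step: one must verify that the component-matching across adjacent links is globally consistent, ruling out any apparent splitting or merging of arc components across link boundaries. This is handled by passing, if necessary, to a finer chaining $\C C_{k',r'}\prec\C C_{k,r}$ inside $V$, where the hypothesis $A\cap F=\emptyset$ combined with property (3) of the chainings guarantees that each arc component of a sufficiently fine link through $V$ is genuinely one arc, so the finite concatenation produces the desired global product.
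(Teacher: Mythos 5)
This lemma is not proved in the paper; it is imported from \cite[Lemma 5.1]{blockjagkailkees} with ``endpoint'' replaced by ``folding point,'' so the only question is whether your argument is actually correct. It is not, as written: the central assertion of your second step --- that each arc component of a link $\ell^j_{k,r}=\pi_k^{-1}(I^j_{k,r})$ is an arc on which $\pi_k$ is a homeomorphism onto $I^j_{k,r}$ --- is false. If $\ba z$ is a $k$-point (i.e.\ $\pi_j(\ba z)=1/2$ for some $j>k$), then the arc component of $\ba z$ in its link folds at $\ba z$ and $\pi_k$ is two-to-one near $\ba z$; such points are present in the links of every chaining $\C C_{k,r}$ regardless of how fine it is --- indeed property (3) of the chainings, which you invoke to rescue the gluing, is precisely the statement that a $p$-point sits \emph{inside} an arc component of a link, alone among $p$-points, so it guarantees the existence of the folds you need to exclude rather than ruling them out. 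Avoiding folding points only prevents \emph{infinitely many} folds from accumulating; it does not prevent a single fold at level $j>k$, so neither ``the link avoids $F$'' nor passing to a finer chaining repairs the step. Consequently your choice $m=k$ does not yield the claimed homeomorphisms, and the equal-length conclusion collapses with it.

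The missing idea is to choose $m$ \emph{first}, using the characterization of folding points recorded in the paper: $\ba x\in F$ if and only if $\ba x$ is a limit of points $\bn{y}{n}\in E_n$. Since no point of $A$ is such a limit and the sets $E_n$ are nested, each $\ba x\in A$ has a neighborhood $W_x$ and an integer $m_x$ with $W_x\cap E_{m_x}=\emptyset$; by compactness of $A$ take $m=\max m_{x_i}$ over a finite subcover and $W=\bigcup W_{x_i}$, so that $W$ is an open neighborhood of $A$ with $W\cap E_m=\emptyset$. Only now should you pick a chaining fine enough that the union $V$ of links meeting $A$ lies in $W$: then no arc component of $V$ contains a point whose $j$-th coordinate is $1/2$ for $j>m$, so $\pi_m$ is injective on each arc component, the product structure and the gluing across adjacent links go through as you describe, and all components have the same $\ba d$-length $s^m|\pi_m(V)|$. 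Your first and third steps are fine in outline, but without this preliminary selection of $m$ the proof does not stand.
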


Let $\ba l$ denote the length of an arc under the metric $\ba d$.

\begin{lemma}\cite[5.4]{blockjagkailkees}\label{lemma5.4}
  Let $\{A_i\}_{i\in \N}$ be a sequence of arcs in $K$.  Suppose
  that $A_i\to B$ in the Hausdorff metric.  Suppose also that there
  is an $M>0$ such that $\ba l(A_i)\le M$ for all $i$.  Then $B$ is
  an arc in $K$ and $\ba l(B)\le M$.
\end{lemma}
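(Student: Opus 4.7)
The plan is to parametrize each $A_i$ by $\ba d$-arc length, extract a uniformly convergent subsequence via Arzel\`a--Ascoli, and verify the limit parametrizes $B$ as an arc of length at most $M$.

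First, observe that $B$ is a continuum, since a Hausdorff limit of connected compacta in a compact metric space is connected. Fix basepoints $\ba x_i \in A_i$ with $\ba x_i \to \ba x \in B$ (possible after passing to a subsequence), and parametrize $A_i$ by $\ba d$-arc length as $f_i : [s_i, t_i] \to A_i$ with $f_i(0) = \ba x_i$ and $t_i - s_i = \ba l(A_i) \le M$. Passing to a further subsequence, $s_i \to s$ and $t_i \to t$ with $t - s \le M$.

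The $f_i$ are uniformly equicontinuous in the ambient metric on $K$: for a subinterval $[u, v] \subseteq [s_i, t_i]$, choose $N$ large enough that $\pi_N$ is injective on $f_i([u, v])$ (which is always possible for large $N$); then $|\pi_N(f_i(u)) - \pi_N(f_i(v))| = |u - v|/s^N$, and since $T$ is $s$-Lipschitz, $|\pi_m(f_i(u)) - \pi_m(f_i(v))| \le |u - v|/s^m$ for all $m \le N$. Combined with the tail estimate on $\sum_{m > N} 2^{-m}$ coming from the ambient metric on $K$, this gives uniform equicontinuity. Arzel\`a--Ascoli then yields a continuous $f : [s, t] \to K$ with $f_i \to f$ uniformly along a subsequence, and Hausdorff convergence $A_i \to B$ forces $f([s, t]) = B$.

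The main obstacle is showing $f$ is essentially injective, so that $B$ is an arc rather than a more complicated continuum; once injectivity holds, $\ba l(B) \le t - s \le M$ is immediate from the parametrization. At any point of $B$ that is not a folding point, Lemma~\ref{lemma5.1} provides a product neighborhood $V \cong C \times I$, into which each arc $A_i$ enters as disjoint subarcs in components of the product; the bound $\ba l(A_i) \le M$ together with the fixed $\ba d$-length of each component limits the number of such crossings, forcing $f$ to be monotone near the point. At a folding point of $B$ no product neighborhood is available, and one must use the refining chainings $\C C_{k,r}$ and the fact that their mesh goes to zero to argue that a local self-intersection of $f$ would force the approximating $A_i$ to traverse certain links repeatedly, contradicting $\ba l(A_i) \le M$. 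This is precisely where the non-recurrent case requires care beyond the periodic case of \cite{blockjagkailkees}, but the chaining argument is analogous once one replaces ``endpoint'' by ``folding point'' throughout.
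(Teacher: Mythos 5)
The paper does not actually reprove this lemma: it is imported verbatim from \cite{blockjagkailkees} with ``endpoint'' replaced by ``folding point,'' so your attempt is necessarily an independent route. The first half of your argument is sound: the estimate $|\pi_m(f_i(u))-\pi_m(f_i(v))|\le |u-v|/s^m$ (valid for every $m$, by pushing down from a level $N$ at which $\pi_N$ is injective on the subarc) gives a uniform Lipschitz bound in the ambient metric, Arzel\`a--Ascoli applies, and the uniform limit $f$ does parametrize $B$, which also handles $\ba l(B)\le M$ once $B$ is known to be an arc. The problem is the step you yourself flag as the main obstacle.

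The gap is that your mechanism for ruling out self-intersections of $f$ does not work where it is needed. At a folding point there is no product neighborhood, and ``forcing the $A_i$ to traverse certain links repeatedly'' yields no contradiction with $\ba l(A_i)\le M$: the links of $\C C_{k,r}$ have arbitrarily small mesh, and an arc can enter and leave a small link many times at arbitrarily small total $\ba d$-cost, so a bounded length budget does not bound the number of traversals. (Your non-folding-point step has a similar soft spot: bounding the number of crossings of a product neighborhood does not by itself force monotonicity of the limit.) The ingredient you are missing is the expansivity of $T$. From $\ba l(A_i)\le M$ one gets $\diam\pi_n(A_i)\le M/s^n$ for every $n$, hence $\diam\pi_n(B)\le M/s^n\to 0$. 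If $\pi_n|_B$ were non-injective for every $n$, then (taking two points of $B$ that first separate at coordinate $m$) some $\pi_m(B)$ contains $1/2$ in its interior; since a tent map of slope $s>\sqrt2$ is locally eventually onto its core, the images $\pi_{m-j}(B)=T^j(\pi_m(B))$ grow to a definite size independent of $m$, contradicting the geometric decay of $\diam\pi_n(B)$. Hence $\pi_n|_B$ is injective for some $n$, so $B$ is an arc (possibly degenerate) and $\ba l(B)=s^n\diam\pi_n(B)\le M$. Note that this projection argument makes the entire parametrization/Arzel\`a--Ascoli apparatus unnecessary; it is the standard (and essentially the cited) proof, and without something equivalent to it your local analysis at folding points cannot be completed.
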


\begin{lemma}\cite[5.5]{blockjagkailkees}\label{lemma5.5}
  Let $\{A_i\}$ be a sequence of arcs in $K$ with endpoints
  $\bn{a}{i}$ and $\bn{b}{i}$ respectively.  Suppose that there is a
  positive number $M$ such that $\ba d(\bn{a}{i}, \bn{b}{i})\le M$
  for each $i\in \N$.  Suppose also that $\bn{a}{i}$ converges to
  some $\ba a\in K$.  Then $B=\limsup\{A_i\}$ is an arc in $K$ and
  $\ba l(B)\le 2M$.
\end{lemma}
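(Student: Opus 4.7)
The plan is to follow the proof of \cite[5.5]{blockjagkailkees}, with folding points playing the role that endpoints play there. The argument combines compactness in the hyperspace of $K$ with Lemma~\ref{lemma5.4}.

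I would first use compactness of $K$ to extract a subsequence along which $\bn{b}{i}$ converges to some $\ba b\in K$, and then, by compactness of the hyperspace of $K$ in the Hausdorff metric, pass to a further subsequence so that $A_i\to B_0$ in the Hausdorff metric for some compact $B_0\subseteq K$ with $\ba a, \ba b\in B_0$. Next I would establish the length bound $\ba l(A_i)\le 2M$. The idea is that since $A_i$ is injective and its endpoints are within $\ba d$-distance $M$, the arc can travel ``away'' from $\bn{a}{i}$ for $\ba d$-length at most $M$ before it must turn and head back toward $\bn{b}{i}$; using Lemma~\ref{lemma5.1} to put subarcs not containing folding points into product charts $C\times I$, so that one may measure $\ba d$-length by a single projection $\pi_m$, the total arc length is therefore at most $2M$. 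Applying Lemma~\ref{lemma5.4} to the Hausdorff-convergent subsequence yields that $B_0$ is an arc with $\ba l(B_0)\le 2M$.

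It remains to show $B=B_0$. The inclusion $B_0\subseteq B$ is immediate. For the reverse, any $\ba y\in B$ is a subsequential limit of points on the arcs; extracting a further Hausdorff-convergent sub-subsequence shows $\ba y$ lies on some Hausdorff-limit arc containing $\ba a$ of length at most $2M$, and the composant structure at $\ba a$ in $K$ forces all such limit arcs through $\ba a$ to lie inside the single arc $B_0$, giving $B=B_0$.

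The main obstacle I expect is the length estimate $\ba l(A_i)\le 2M$. In the periodic case of \cite{blockjagkailkees} the finitely many endpoints of $K$ are easy to isolate, whereas in the non-recurrent case the potentially uncountable set of folding points can lie on the arcs $A_i$ themselves. One must argue, using the chain structure $\C C_{q,m}$ from \cite{rainesstimac:nr}, that these folding points do not introduce additional folding beyond the ``once away, once back'' structure permitted by the $\ba d$-constraint on the endpoints.
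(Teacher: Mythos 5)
The paper does not actually reprove this lemma; it defers to \cite[5.5]{blockjagkailkees}, noting the proof carries over verbatim with ``end point'' replaced by ``folding point.'' Measured against that argument, your proposal has two genuine problems, both concerning where the factor $2$ comes from. First, the length estimate: in $K$ the arc $A_i$ joining $\bn{a}{i}$ and $\bn{b}{i}$ is the unique arc between those points in their composant, and $\ba d$ is defined by choosing $n$ so large that $\pi_n|_{A_i}$ is a homeomorphism; consequently $\ba l(A_i)=\ba d(\bn{a}{i},\bn{b}{i})\le M$ exactly. There is no ``travel away for length $M$ and then turn back,'' and Lemma \ref{lemma5.1} is not needed here. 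Your bound $\ba l(A_i)\le 2M$ is obtained from a picture (the arc folding once under a projection) that conflates the behavior of a fixed $\pi_m$ on $K$ with the definition of $\ba d$, which explicitly uses a projection that is injective on the arc; and in any case the $2$ does not enter at this step.

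Second, and more seriously, your claim $B=B_0$ is false, and the assertion that ``the composant structure at $\ba a$ forces all limit arcs through $\ba a$ to lie inside the single arc $B_0$'' is precisely the wrong statement. Different subsequences of $\{A_i\}$ can converge in the Hausdorff metric to arcs emanating from $\ba a$ in opposite directions along its composant (say $A_{2i}\to[\ba a,\ba a+M]$ while $A_{2i+1}\to[\ba a-M,\ba a]$), so $\limsup\{A_i\}$ is in general strictly larger than any one subsequential limit $B_0$. If your argument worked it would give $\ba l(B)\le M$, which is stronger than the lemma claims and is not true. The actual source of the $2$ is this: by Lemma \ref{lemma5.4} every Hausdorff-subsequential limit of $\{A_i\}$ is an arc of $\ba d$-length at most $M$ containing $\ba a$, hence contained in the arc of $\ba d$-radius $M$ about $\ba a$ in its composant; $\limsup\{A_i\}$ is the union of all such limits, which is closed, connected (they share the point $\ba a$), and contained in that radius-$M$ arc, and is therefore a (possibly degenerate) arc with $\ba l(B)\le 2M$.
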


\begin{lemma}
  Let $C$ be a composant of $K$, and let $\ba z\in C$, then $h(\ba z)\in
  \sh^{-b}(C)$.
\end{lemma}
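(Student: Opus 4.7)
The plan is to manufacture a proper subcontinuum of $K$ that contains both $h(\ba z)$ and $\sh^{-b}(\ba z)$; once this is done the two points share a composant, which must be $\sh^{-b}(C)$ because $\sh^{-b}(\ba z)\in \sh^{-b}(C)$. The desired subcontinuum will be an arc obtained as a Hausdorff $\limsup$ via Lemma~\ref{lemma5.5}. This strategy mirrors the preceding lemma, except that there a shrinking-link argument collapsed the approximating arcs to a single point (yielding equality at folding points), whereas here we merely need the $\ba d$-lengths of the arcs to stay uniformly bounded.

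Following the recursion used in the preceding lemma, I would fix chainings $h(\C C_{q_j,m_j})\prec\C C_{p_j,n_j}\prec h(\C C_{q_{j-1},m_{j-1}})\prec\C C_{p_{j-1},n_{j-1}}$ with $p_j-q_j=b$ and all indices tending to infinity, choose $\bn{z}{j}\in E_{q_j}\cap C_0\cap \ell^z_{q_j,m_j}$ (where $\ell^z_{q_j,m_j}$ is a link of $\C C_{q_j,m_j}$ containing $\ba z$), and note that $\bn{z}{j}\to \ba z$ as the meshes shrink. Letting $\ell^{h(z)}_{p_j,n_j}$ be the link of $\C C_{p_j,n_j}$ containing $h(\ell^z_{q_j,m_j})$, the identity $h_{q_j,p_j}|_{E_{q_j}\cap C_0}=\sh^{-b}|_{E_{q_j}\cap C_0}$ places $h(\bn{z}{j})$ and $\sh^{-b}(\bn{z}{j})=h_{q_j,p_j}(\bn{z}{j})$ in a common arc component $A_j$ of $\ell^{h(z)}_{p_j,n_j}$; let $B_j\subset A_j$ be the subarc joining them (contained in $C_0$ since $h(C_0)=\sh^{-b}(C_0)=C_0$). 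The crucial estimate --- paralleling the corresponding step in \cite{blockjagkailkees} --- is a uniform bound $\ba l(B_j)\le M$. By property~(3) (\cite[Lemma~2.8]{rainesstimac:nr}), $\sh^{-b}(\bn{z}{j})$ is the unique $p_j$-point of $A_j$, so $\pi_{p_j}|_{B_j}$ has at most one turning point and splits $B_j$ into at most two monotone pieces, each of $\ba d$-length bounded by $s^{p_j}$ times the width of a link of $\C C_{p_j,n_j}$; the detailed chain construction in \cite{rainesstimac:nr} should produce the $j$-independent bound.

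Given this bound, Lemma~\ref{lemma5.5} applied with $\bn{a}{j}=h(\bn{z}{j})\to h(\ba z)$ and $\bn{b}{j}=\sh^{-b}(\bn{z}{j})\to \sh^{-b}(\ba z)$ yields that $B:=\limsup B_j$ is an arc in $K$ of $\ba d$-length at most $2M$ containing both $h(\ba z)$ and $\sh^{-b}(\ba z)$; since $B$ is an arc, it is a proper subcontinuum, so the two points lie in a common composant, which is necessarily $\sh^{-b}(C)$. The main obstacle I foresee is the uniform bound on $\ba l(B_j)$, which requires precise control of the partition intervals used in constructing $\C C_{p_j,n_j}$; a secondary difficulty is guaranteeing that $E_{q_j}\cap C_0$ meets $\ell^z_{q_j,m_j}$ for all sufficiently large $j$, a fact that is automatic when $\ba z$ lies near the folding set $F$ but which in the general case may demand a small adjustment of the chaining or a passage through a neighbouring link.
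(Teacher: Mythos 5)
Your overall mechanism is the paper's: approximate by $q$-points of $C_0$, use $h_{q,p}|_{E_q\cap C_0}=\sh^{-b}|_{E_q\cap C_0}$ to put $h(\bn{x}{n})$ and $\sh^{-b}(\bn{x}{n})$ in a common arc component of a link of $\C C_{p,n}$ (whose $\ba d$-length is at most $2\eps s^{p}$ since it contains a unique $p$-point), and then apply Lemma~\ref{lemma5.5} to get a limiting arc joining $h$ and $\sh^{-b}$ of the limit point. But you apply this mechanism to the wrong point, and the ``secondary difficulty'' you flag at the end is in fact fatal to your version. You need $\bn{z}{j}\in E_{q_j}\cap C_0$ with $\bn{z}{j}\to\ba z$ and $q_j\to\infty$; by the paper's characterization of folding points ($\ba x\in F$ iff $\ba x$ is a limit of points $\bn{y}{n}\in E_n$), such a sequence exists \emph{only when} $\ba z\in F$. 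For $\ba z\nin F$ there are $m$ and a neighborhood $U$ of $\ba z$ with $U\cap E_m=\nowt$, and since $E_{q_j}\subseteq E_m$ once $q_j\ge m$, the link $\ell^z_{q_j,m_j}$ eventually contains no $q_j$-points at all. No ``adjustment of the chaining'' or ``passage through a neighbouring link'' can repair this: passing to a neighbouring link destroys the convergence $\bn{z}{j}\to\ba z$ that your application of Lemma~\ref{lemma5.5} requires (you need $\bn{a}{j}=h(\bn{z}{j})\to h(\ba z)$). The recursive tower of chainings, which you imported from the preceding lemma, is precisely the feature that only works for folding points.

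The missing idea is a transfer step that lets you prove the statement for one convenient point of $C$ rather than for $\ba z$ itself. The paper fixes a \emph{single} pair $(q,p)$ with $h(\C C_{q,m})\prec\C C_{p,n}$ and $h_{q,p}|_{E_q\cap C_0}=\sh^{-b}|_{E_q\cap C_0}$, picks a $q$-point $\ba x\in C\cap E_q$, and approximates \emph{that} point by a sequence $\bn{x}{n}\in C_0\cap E_q$ (possible because $\ba x$ is itself a $q$-point). The arcs $A_n$ in $C_0$ from $\sh^{-b}(\bn{x}{n})$ to $h(\bn{x}{n})$ then have $\ba l(A_n)$ bounded by a single constant $2\eps s^{q}$ ($\eps$ the mesh of the fixed $\C C_{p,n}$), so your worry about a $j$-uniform bound across a tower of chainings also evaporates. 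Lemma~\ref{lemma5.5} gives an arc $B=\limsup\{A_n\}$ containing $\sh^{-b}(\ba x)$ and $h(\ba x)$, whence $h(\ba x)\in\sh^{-b}(C)$; and since $h$ is a homeomorphism it maps the composant $C$ into a single composant, which must therefore be $\sh^{-b}(C)$, giving $h(\ba z)\in\sh^{-b}(C)$ for every $\ba z\in C$. That last sentence is the step your proposal lacks.
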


\begin{proof}
  Let $\ba x\in C\cap E_q$.  Then there is a sequence of points $\bn{x}{n}\in C_0\cap  E_q$
  such that $\bn{x}{n}\to \ba x$.  Since $h_{q,p}(\bn{x}{n})=\sh^{-b}(\bn{x}{n})$ we see that
  $$\ba d(h(\bn{x}{n}), \sh^{-b}(\bn{x}{n}))<2\epsilon\cdot s^q$$ where $\epsilon>0$ is the mesh of the chaining $\C C_{p,n}$.  So by Lemma \ref{lemma5.5},
  if we let $A_n$ be the arc in $C_0$ with endpoints $\sh^{-b}(\bn{x}{n})$ and
  $h(\bn{x}{n})$ then we see that $\ba l(A_n)<2\epsilon\cdot s^q$ and hence $B=\limsup\{A_n\}$ is an arc in $K$.  This arc contains $\sh^{-b}(\ba x)$ and
  $h(\ba x)$.  Thus $h(\ba x)\in \sh^{-b}(C)$.  Since $h$ is a homeomorphism,
  this shows that $h(\ba z)\in \sh^{-b}(C)$.
\end{proof}

\begin{lemma}
  Let $C$ be a composant of $K$.  Let $\bn{x}{i}, \bn{x}{i+1}\in C$
  be adjacent in $E_p$, i.e., if $A$ is the arc with
  endpoints $\bn{x}{i}$ and $\bn{x}{i+1}$ then $A$ contains no other $p$-points.
  Then
  $\ba{d}(\bn{x}{i}, \bn{x}{i+1})\le s^p$ where $s$ is the slope of $T$.
\end{lemma}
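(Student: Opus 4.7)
The plan is to reduce the composant distance $\ba d(\bn{x}{i},\bn{x}{i+1})$ from a high-level projection $\pi_m$ down to the projection $\pi_p$, and then bound the result by the diameter of the interval $[0,T(1/2)]$. The key point is that the hypothesis of no interior $p$-points on the arc $A$ translates exactly into monotonicity of $\pi_p|_A$.

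First, choose $m > p$ large enough that $\pi_m|_A$ is a homeomorphism onto an interval $J=\pi_m(A)\subset[0,1]$. By the definition of $\ba d$ we then have
\[
\ba d(\bn{x}{i},\bn{x}{i+1})=s^{m}|\pi_m(\bn{x}{i})-\pi_m(\bn{x}{i+1})|.
\]
Since $\pi_p=T^{m-p}\circ\pi_m$, the behavior of $\pi_p|_A$ is controlled by the behavior of $T^{m-p}$ on $J$.

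Next, I would argue that $T^{m-p}|_J$ is linear (not just piecewise linear) with slope $\pm s^{m-p}$. The only way $T^{m-p}|_J$ could fail to be linear is if there were an interior point $y\in J$ with $T^{k}(y)=1/2$ for some $0\le k<m-p$. But such a $y$ would be $\pi_m(\ba z)$ for a unique $\ba z$ in the interior of $A$, and then $\pi_{m-k}(\ba z)=T^k(\pi_m(\ba z))=1/2$ with $m-k>p$, so $\ba z$ would be a $p$-point strictly between $\bn{x}{i}$ and $\bn{x}{i+1}$, contradicting adjacency in $E_p$. Thus $T^{m-p}|_J$ is linear with slope $\pm s^{m-p}$, and hence
\[
|\pi_p(\bn{x}{i})-\pi_p(\bn{x}{i+1})|=s^{m-p}|\pi_m(\bn{x}{i})-\pi_m(\bn{x}{i+1})|.
\]
Combining with the formula for $\ba d$ gives $\ba d(\bn{x}{i},\bn{x}{i+1})=s^{p}|\pi_p(\bn{x}{i})-\pi_p(\bn{x}{i+1})|$.

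Finally, since $\pi_p$ takes values in $[0,T(1/2)]\subset[0,1]$, we have $|\pi_p(\bn{x}{i})-\pi_p(\bn{x}{i+1})|\le 1$, and therefore $\ba d(\bn{x}{i},\bn{x}{i+1})\le s^{p}$, as desired. The only genuine content is the verification that interior turning points of $T^{m-p}$ on $J$ would force an interior $p$-point on $A$; I expect this step — translating a $1/2$-value of some iterate of $T$ into membership in $E_p$ — to be the part that actually uses the adjacency hypothesis, while everything else is unwinding the definition of $\ba d$.
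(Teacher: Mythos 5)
Your proof is correct and is essentially the paper's argument: the paper simply asserts that adjacency in $E_p$ makes $\pi_p|_A$ a homeomorphism and then reads off $\ba d(\bn{x}{i},\bn{x}{i+1})=s^p|\pi_p(\bn{x}{i})-\pi_p(\bn{x}{i+1})|\le s^p$. Your extra step relating $\pi_m$ to $\pi_p$ via $T^{m-p}$ and ruling out interior turning points is just a careful verification of that asserted fact, which the paper leaves implicit.
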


\begin{proof}
  Let $A$ be the arc in $C$ with endpoints $\bn{x}{i}$ and $\bn{x}{i+1}$.  Since $\bn{x}{i}$ and $\bn{x}{i+1}$ are adjacent $p$-points we
  know that $\pi_p|_A$ is a homeomorphism.  Thus $$\ba d(\bn{x}{i},
  \bn{x}{i+1})=s^p|\pi_p(\bn{x}{i})-\pi_p(\bn{x}{i+1})|\le s^p$$
\end{proof}

Recall that each composant, $C\neq C_0$, of $K$ is the continuous
image of $\R$ under a continuous one-to-one function $g_C$.  Whereas
$C_0$ is homeomorphic to $\R^+$ via a homeomorphism $g$.   Let $<$
be defined on $C\neq C_0$ by $\ba x<\ba y$ if and only if
$g_C^{-1}(\ba x)<g_C^{-1} (\ba y)$, and define $<$ on $C_0$ in a
similar way.

\begin{lemma}
  Let $C$ be a composant of $K$ and let $\ba x<\ba y$ in $C$.  Then
  $h(\ba x)<h(\ba y)$ if and only if $\sh^{-b}(\ba x)<\sh^{-b}(\ba y)$.
\end{lemma}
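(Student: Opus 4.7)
The proof splits into two cases. For $C = C_0$: both $h$ and $\sh^{-b}$ fix the unique endpoint $(0,0,\dots)$ of $K$, and any continuous bijection of $C_0$ onto itself fixing this endpoint preserves the natural order on $C_0$, since the arc from $(0,0,\dots)$ to any $\ba z \in C_0$ is unique and is mapped to the unique arc from $(0,0,\dots)$ to the image of $\ba z$. Thus the biconditional is immediate.

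For $C \neq C_0$ and $\ba x < \ba y$ in $C$, I plan to mimic the approximation used in the proof of the earlier composant lemma. Choose sequences $\bn{x}{n}, \bn{y}{n} \in C_0 \cap E_q$ lying in the links of $\C C_{q,m}$ containing $\ba x$ and $\ba y$ respectively, so $\bn{x}{n} \to \ba x$ and $\bn{y}{n} \to \ba y$. Since $h_{q,p}(\bn{x}{n}) = \sh^{-b}(\bn{x}{n})$ and similarly for $\bn{y}{n}$, the pairs $\{h(\bn{x}{n}), \sh^{-b}(\bn{x}{n})\}$ and $\{h(\bn{y}{n}), \sh^{-b}(\bn{y}{n})\}$ share links of $\C C_{p,n}$, so they are joined in $C_0$ by arcs of length at most $M := 2\epsilon s^q$, where $\epsilon$ is the mesh of $\C C_{p,n}$. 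By Lemma~\ref{lemma5.5}, passing to the limsup produces arcs $\alpha_x, \alpha_y \subset \sh^{-b}(C)$ of length at most $2M$ connecting $h(\ba x)$ to $\sh^{-b}(\ba x)$ and $h(\ba y)$ to $\sh^{-b}(\ba y)$.

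In a parametrization of $\sh^{-b}(C)$ compatible with the $\ba d$-metric, the four points correspond to real parameters $r_1 \leftrightarrow h(\ba x)$, $r_1' \leftrightarrow \sh^{-b}(\ba x)$, $r_2 \leftrightarrow h(\ba y)$, $r_2' \leftrightarrow \sh^{-b}(\ba y)$, with $|r_i - r_i'| \le 2M$ while $|r_1' - r_2'| = \ba d(\sh^{-b}(\ba x), \sh^{-b}(\ba y)) > 0$ depends only on the chosen points. Taking the chain parameters so that $4M < |r_1' - r_2'|$ makes the $2M$-neighborhoods of $r_1'$ and $r_2'$ disjoint, and hence forces $r_1 < r_2$ if and only if $r_1' < r_2'$, which is the desired biconditional.

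The main obstacle is to legitimately drive $M$ down. Because $s \in (\sqrt 2, 2)$ gives $s < 2$, and the $K$-mesh of $\C C_{q,m}$ (hence of $h(\C C_{q,m})$, which lower-bounds $\epsilon$) decays like $2^{-q}$, the quantity $M = 2\epsilon s^q$ tends to zero as $q$ grows. The subtlety is that a non-folding point of $C$ cannot be approximated by $q$-points with $q \to \infty$, as such sequences would force the limit to be a folding point (by the reasoning of the folding-point lemma). I would handle this by first establishing the biconditional for $\ba x, \ba y \in C \cap E_q$ with $q$ arbitrarily large, and then extending to all $\ba x, \ba y \in C$ using the density of $E_q \cap C$ in $C$ together with the fact that both $h$ and $\sh^{-b}$ preserve the betweenness relation on arcs.
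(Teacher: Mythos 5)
Your argument takes a genuinely different, more quantitative route than the paper's, and in outline it works. The paper's proof is soft and short: since $h|_C$ and $\sh^{-b}|_C$ are bijections of a line (or ray) preserving betweenness, each is order-preserving or order-reversing, so the biconditional for all pairs reduces to checking a single pair; for adjacent $q$-points $\bn{x}{i}<\bn{x}{i+1}$ the pairs $\{h(\bn{x}{i}),\sh^{-b}(\bn{x}{i})\}$ and $\{h(\bn{x}{i+1}),\sh^{-b}(\bn{x}{i+1})\}$ lie in disjoint arc components of links of $\C C_{p,n}$, and two disjoint subarcs of a composant are linearly ordered, which forces the two maps to agree in direction. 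Your displacement-versus-separation estimate proves the same thing, and it has the merit of invoking $h_{q,p}=\sh^{-b}$ only on $E_q\cap C_0$, where the cited result is actually stated. Three points need attention, though none is fatal. First, your reason that $M$ can be driven to zero is off: $\epsilon$ is the mesh of the \emph{coarse} chain $\C C_{p,n}$, not of $\C C_{q,m}$ or of $h(\C C_{q,m})$, so bounding the latter tells you nothing; the correct estimate is that an arc component of a link of $\C C_{p,n}$ contains at most one $p$-point and projects under $\pi_p$ into an interval $I^j_{p,n}$ of length at most $2s^{-(p+n+1)}$, so its $\ba d$-length is at most $4s^{-(n+1)}$, which tends to $0$ as $n\to\infty$. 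Second, shrinking $M$ for a given pair requires that $h_{q',p'}|_{E_{q'}\cap C_0}=\sh^{-b}|_{E_{q'}\cap C_0}$ hold with the \emph{same} $b$ for arbitrarily fine parameters; the paper uses this implicitly in its folding-point lemma, but you should either cite it or, better, avoid it entirely by using the monotonicity of $h|_C$ and $\sh^{-b}|_C$ to reduce to one pair whose $\ba d$-separation exceeds $4M$ for the single fixed choice of $(q,p,m,n)$ — which is in effect what the paper does. Third, your worry about approximating points of $C$ by $q$-points with $q\to\infty$ is a red herring: the approximating sequence lies in $E_q\cap C_0$ for a \emph{fixed} $q$, which is dense in $K$, and Lemma~\ref{lemma5.5} imposes no condition on the limit point itself, so the detour through $C\cap E_q$ and a density argument in your last paragraph is unnecessary.
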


\begin{proof}
  Since $h$ is a homeomorphism of $C$ it is either order-preserving or reversing.  Let $\bn{x}{i}$
  and $\bn{x}{i+1}$ be adjacent $q$-points in $C$ with
  $\bn{x}{i}<\bn{x}{i+1}$.  By \cite{rainesstimac:nr} we know that
  $h(\bn{x}{i})$ and $\sh^{-b}(\bn{x}{i})$ are on the same arc component of a
  link of $\C C_{p,n}$, $A_i$.  Also $h(\bn{x}{i+1})$ and
  $\sh^{-b}(\bn{x}{i+1})$ are on the same arc component of a link of $\C C_{p,n}$, $A_{i+1}$.  It is clear
  that every point of $A_i$ is less than every point of $A_{i+1}$ or every point of $A_i$ is greater than every point of $A_{i+1}$ depending upon whether $\sh^{-b}$ is order-preserving or reversing.  Hence $h(\bn{x}{i})<h(\bn{x}{i+1})$ if and only if $\sh^{-b}(\ba x)<\sh^{-b}(\ba y)$.
\end{proof}

\begin{lemma}
  There is a real number $M>0$ such that $\ba d(\sh^{-b}(\ba z), h(\ba
  z))\le M$.
\end{lemma}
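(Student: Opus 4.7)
The plan is simple: reduce to the dense set of $q$-points in $C_0$, where the construction of \cite{rainesstimac:nr} already supplies a uniform bound on $\ba d(h(\ba x),\sh^{-b}(\ba x))$, and then propagate this bound to all of $K$ by a single application of the Hausdorff-limit machinery in Lemma \ref{lemma5.5}. Note first that by the earlier lemma asserting $h(\ba z)\in \sh^{-b}(C)$ whenever $\ba z\in C$, the composant distance $\ba d(h(\ba z),\sh^{-b}(\ba z))$ is well-defined for every $\ba z\in K$.

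For the base case, fix $\ba x\in E_q\cap C_0$; the construction in \cite{rainesstimac:nr} places $h(\ba x)$ and $\sh^{-b}(\ba x)=h_{q,p}(\ba x)$ in the same arc component of a single link of $\C C_{p,n}$, so the subarc of $C_0$ joining them has $\ba d$-length bounded by a fixed constant $M_1$ depending only on the mesh of $\C C_{p,n}$ (this is exactly the estimate $2\eps s^q$ that appears in the proof of the earlier lemma $h(\ba z)\in \sh^{-b}(C)$). Thus $\ba d(h(\ba x),\sh^{-b}(\ba x))\le M_1$ uniformly on $E_q\cap C_0$. Next, given an arbitrary $\ba z\in K$, use density of $E_q\cap C_0$ in $K$ to pick $\bn{z}{n}\in E_q\cap C_0$ with $\bn{z}{n}\to \ba z$, and let $A_n$ be the arc in $C_0$ joining $h(\bn{z}{n})$ to $\sh^{-b}(\bn{z}{n})$, so that $\ba l(A_n)\le M_1$. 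Continuity of $h$ and $\sh^{-b}$ gives $h(\bn{z}{n})\to h(\ba z)$ and $\sh^{-b}(\bn{z}{n})\to \sh^{-b}(\ba z)$, so Lemma \ref{lemma5.5} produces an arc $B=\limsup A_n$ with $\ba l(B)\le 2M_1$ that contains both limit points as subsequential limits of points on $A_n$. The desired uniform constant is then $M:=2M_1$.

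The principal subtlety will be confirming that the limit arc $B$ sits inside the common composant $\sh^{-b}(C)$ of $h(\ba z)$ and $\sh^{-b}(\ba z)$, even though every approximating arc $A_n$ lies in $C_0$, which is typically a different composant. This is forced by the standard fact that an arc in $K$ lies in a single composant: since $B$ contains $h(\ba z)\in \sh^{-b}(C)$, we must have $B\subseteq \sh^{-b}(C)$, and only then is the inequality $\ba d(h(\ba z),\sh^{-b}(\ba z))\le \ba l(B)\le 2M_1$ legitimate as a statement about the composant metric on $\sh^{-b}(C)$.
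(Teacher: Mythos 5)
Your base step and the Lemma~\ref{lemma5.5} limiting argument are fine as far as they go, but the reduction to $E_q\cap C_0$ breaks down at the crucial point: $E_q\cap C_0$ is \emph{not} dense in $K$, so for a general $\ba z\in K$ there is no sequence $\bn{z}{n}\in E_q\cap C_0$ with $\bn{z}{n}\to\ba z$. Indeed, if $\ba x\in E_q$ then $\pi_n(\ba x)=1/2$ for some $n>q$, which forces $\pi_q(\ba x)=T^{n-q}(1/2)$ to lie in the forward orbit of the critical value; hence $\pi_q(E_q)$ is contained in the closure of $\{T^j(1/2):j\ge 1\}$, and this set cannot be dense in $[0,T(1/2)]$ precisely because the critical point is assumed non-recurrent (a critical orbit dense in the core would accumulate at $1/2$). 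Since $\pi_q$ is continuous and onto, $E_q$ itself is not dense in $K$. Your argument therefore establishes the uniform bound only on the closure of $E_q\cap C_0$ --- which does contain all $q$-points and the folding points, but not an arbitrary $\ba z\in K$.

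The missing ingredient is the order structure of the composants, which is how the paper actually finishes. Fix the composant $C$ containing $\ba z$ and take the two \emph{adjacent} $q$-points $\bn{x}{i}\le\ba z<\bn{x}{i+1}$ of $C$; these exist because consecutive $q$-points along $C$ are at $\ba d$-distance at most $s^q$. Since $h$ and $\sh^{-b}$ are both order-preserving (or both order-reversing) on $C$ by the preceding lemma, both $h(\ba z)$ and $\sh^{-b}(\ba z)$ are trapped in the arc $B$ running from $\min\{\sh^{-b}(\bn{x}{i}),h(\bn{x}{i})\}$ to $\max\{\sh^{-b}(\bn{x}{i+1}),h(\bn{x}{i+1})\}$, and $\ba l(B)$ is bounded by the $\ba d$-distance between the images of the two adjacent $q$-points plus twice the length of an arc component of a link of $\C C_{p,n}$ (using that $\sh^{-b}(\bn{x}{i})$ and $h(\bn{x}{i})$ share an arc component of a link, and likewise at $i+1$). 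This yields $\ba d(\sh^{-b}(\ba z),h(\ba z))\le s^q+4\eps s^q$. Without some such sandwiching step, your bound never reaches the points of $K$ that are not approximated by $q$-points.
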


\begin{proof}
  Let $\eps>0$ be the mesh of $\C C_{p,n}$.  Then notice that the $\overline{d}$-length of an arc component of a link of $\C C_{p,n}$
  is at most $2\epsilon\cdot s^p$ since these arc components contain at most one $p$-point.  Let $\bn{x}{i}$ and
  $\bn{x}{i+1}$ be adjacent $q$-points in $C$ with $\bn{x}{i}\le  \ba
  z<\bn{x}{i+1}$.  Then, without loss of generality, $h(\bn{x}{i})\le h(\ba z)<h(\bn{x}{i+1})$ and $\sh^{-b}(\bn{x}{i})\le \sh^{-b}(\ba z)<\sh^{-b}(\bn{x}{i+1})$.  By
  \cite{rainesstimac:nr}, $\sh^{-b}(\bn{x}{i})$ and $h(\bn{x}{i})$ are on the
  same arc-component of a link of $\C C_{p,n}$, and the same is true
  for $\sh^{-b}(\bn{x}{i+1})$ and $h(\bn{x}{i+1})$.  Let $$\ba
  a=\min\{\sh^{-b}(\bn{x}{i}), h(\bn{x}{i})\}$$ and let $$\ba
  b=\max\{\sh^{-1}(\bn{x}{i+1}), h(\bn{x}{i+1})\}$$  Let $B$ be the arc with endpoints $\ba a$ and $\ba b$.
  We see that both $\sh^{-b}(\ba z)$ and $h(\ba z)$ are in $B$.  Moreover the
  length of $B$ is less than or equal to the length of the arc from
  $\sh^{-b}(\bn{x}{i})$ to $\sh^{-b}(\bn{x}{i+1})$ plus the lengths of the arc
  components of a link of $\C C_{p,n}$ which contains a $p$-point.
  That is to say $$\ba d(\sh^{-b}(\ba z), h(\ba z))\le \ba l(B)\le s^q+4\eps
  s^q$$
\end{proof}

\section{Isotopy}

\begin{lemma}
  Let $\ba x\in K\setminus F$, and suppose that $(\bn{x}{n})_{n\in
  \N}\subseteq K$ be a sequence such that $\bn{x}{n}\to \ba x$.
  Then the arcs $A_n$ with endpoints $\bn{x}{n}$ and $h(\bn{x}{n})$
  converge in the Hausdorff metric to the arc $A$ with endpoints
  $\ba x$ and $h(\ba x)$ if there is an $N\in \N$ such
  that for all $n\ge N$, $A_n\cap E_m=\emptyset$, (where $m$ is
  chosen as in Lemma \ref{lemma5.1} for the arc $A$).
\end{lemma}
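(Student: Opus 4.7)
The plan is to use the hypothesis $A_n\cap E_m=\emptyset$ to control the $\ba d$-length of each $A_n$ via the projection $\pi_m$, then extract a Hausdorff-convergent subsequence by compactness and identify its limit as $A$ by uniqueness of arcs in a composant.

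First I apply Lemma \ref{lemma5.1} to $A$ to get an integer $m$ such that $\pi_m|_B$ is a homeomorphism for every arc $B$ in the product neighborhood $V$ of $A$; in particular $\pi_m|_A$ is a monotone homeomorphism onto its image, so setting $L:=s^m|\pi_m(\ba x)-\pi_m(h(\ba x))|$ we have $\ba l(A)=L$. Every fold of $\pi_m$ along an arc of $K$ occurs at a point $\ba y$ with $\pi_j(\ba y)=1/2$ for some $j>m$, i.e., at an $m$-point, so the hypothesis $A_n\cap E_m=\emptyset$ for $n\ge N$ forces $\pi_m|_{A_n}$ to be a monotone homeomorphism onto its image. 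Hence
$$\ba l(A_n)=s^m|\pi_m(\bn{x}{n})-\pi_m(h(\bn{x}{n}))|,$$
which converges to $L$ by continuity of $h$ and $\pi_m$.

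Next, given any subsequence $(A_{n_k})$, the compactness of the hyperspace of closed subsets of $K$ under the Hausdorff metric produces a further subsequence $A_{n_{k_l}}$ converging to some closed set $B\subseteq K$. Passing to the tail where $\ba l(A_{n_{k_l}})\le L+\eps$, Lemma \ref{lemma5.4} yields that $B$ is an arc with $\ba l(B)\le L+\eps$ for every $\eps>0$, so $\ba l(B)\le L$. The convergences $\bn{x}{n_{k_l}}\to\ba x$ and $h(\bn{x}{n_{k_l}})\to h(\ba x)$ guarantee $\ba x,h(\ba x)\in B$. Within the composant of $\ba x$ there is a unique arc from $\ba x$ to $h(\ba x)$, namely $A$; thus $A$ is a sub-arc of $B$, and the chain $L=\ba l(A)\le\ba l(B)\le L$ forces the complementary sub-arcs of $B$ to have $\ba d$-length zero, giving $B=A$.

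Since every subsequence of $(A_n)$ has a further subsequence converging to $A$ in the Hausdorff metric, we conclude $A_n\to A$. The main technical point I expect is the observation that $A_n\cap E_m=\emptyset$ yields monotonicity of $\pi_m|_{A_n}$; once that is granted, the rest of the argument is a standard compactness step in the hyperspace combined with uniqueness of arcs within composants of the indecomposable continuum $K$.
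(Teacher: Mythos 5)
Your proof is correct, but it reaches the conclusion by a genuinely different route than the paper. Both arguments turn on the same key observation---that $A_n\cap E_m=\emptyset$ forces $\pi_m|_{A_n}$ to be a homeomorphism, since $\pi_m$ can only fold at $m$-points---but they exploit it differently. The paper uses it to trap the arcs: once $\bn{x}{n}$ and $h(\bn{x}{n})$ lie in the product neighborhood $V\homeo C\times I$ of Lemma \ref{lemma5.1} and $\pi_m|_{A_n}$ is monotone, the whole arc $A_n$ must stay inside a single component of $V$, and convergence follows from the product structure. You instead use monotonicity to compute $\ba l(A_n)=s^m|\pi_m(\bn{x}{n})-\pi_m(h(\bn{x}{n}))|\to\ba l(A)$ and then run a hyperspace-compactness argument: every subsequential Hausdorff limit $B$ is, by Lemma \ref{lemma5.4}, an arc of length at most $\ba l(A)$ containing $\ba x$ and $h(\ba x)$, hence, by uniqueness of arcs within a composant, equal to $A$. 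Your version buys independence from the finer structure of $V$ (you need only the integer $m$, not the neighborhood itself) at the cost of invoking Lemma \ref{lemma5.4} and the subsequence extraction; the paper's version is shorter but leans on unstated properties of the product neighborhood. One small omission: before applying Lemma \ref{lemma5.1} to $A$ you should note, as the paper does, that $A$ contains no folding point (a folding point $\ba z$ strictly between $\ba x$ and $h(\ba x)$ is fixed by $h$ and would force $h$ to reverse orientation on the composant); since the statement already stipulates that $m$ is chosen as in Lemma \ref{lemma5.1} for $A$, this is implicit, but it deserves a sentence.
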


\begin{proof}
  Notice that since $\ba x\nin F$, the arc $A$ with endpoints $\ba x$
  and $h(\ba x)$ does not intersect $F$.  To see this, suppose that
  there is a folding point, $\ba z$, between $\ba x$ and $h(\ba x)$.
  then we would have, say, $\ba x<\ba z=h(\ba z)<h(\ba x)$ which would
  imply that $h$ is order-reversing on $C$, a contradiction.  So $A$
  contains no folding points of $K$ and we can apply Lemma
  \ref{lemma5.1}

  Let $m\in \N$ be chosen as in Lemma \ref{lemma5.1}, and let $V$ be the
  neighborhood of $A$ as described in that lemma.  Let $N\in \N$ be
  large enough so that $A_n\cap E_m=\emptyset$.  Then $\pi_m|_{A_n}$
  is a homeomorphism for all $n\ge N$.  Let $N'\ge N$ be defined so
  that for all $n\ge N'$, $\bn{x}{n}$ and $h(\bn{x}{n})$ are in $V$.
  Then for all $n\ge N'$, $A_n\subseteq V$.  It follows that $A_n\to
  A$ in the Hausdorff metric.
\end{proof}

Let $\ba z\in C$ a composant of $K$.  Let $\delta>0$.  By $\ba
z+\delta$ we mean the point $\ba y\in C$ such that $\ba z<\ba y$ and
such that $\overline{d}(\ba z, \ba y)=\delta$.  We define $\ba
z-\delta$ in a similar fashion.

\begin{lemma}\label{znotfoldpoint}
  Let $\ba z\in K\setminus F$.  If
  $(\bn{z}{n})_{n\in \N}\subseteq K$ with $\bn{z}{n}\to \ba z$, then
  the arcs, $A_n$, with endpoints $\bn{z}{n}$ and $h(\bn{z}{n})$
  converge in the Hausdorff metric to $A$ the arc with endpoints
  $\ba z$ and $h(\ba z)$.
\end{lemma}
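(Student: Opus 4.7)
The strategy is to drop the $A_n \cap E_m = \emptyset$ hypothesis of the preceding lemma by combining the product structure from Lemma~\ref{lemma5.1} with a subsequential compactness argument in the hyperspace of continua of $K$. Since $\ba z \notin F$, the arc $A$ with endpoints $\ba z$ and $h(\ba z)$ contains no folding points, by the order-preservation argument used in the preceding lemma. Apply Lemma~\ref{lemma5.1} to $A$ to obtain a neighborhood $V \cong C \times I$ of $A$ and an integer $m$ such that $\pi_m$ is a homeomorphism on each arc in $V$ and every component of $V$ has the same $\ba d$-length $L$. Let $V_0$ denote the component of $V$ that contains $A$.

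Next, I would establish a uniform bound on $\ba l(A_n)$. Using the bound $\ba d(\sh^{-b}(\ba z), h(\ba z)) \le M$ established earlier, together with continuity of the composant-metric $\ba d$ along the relevant parameterization, one verifies $\ba l(A_n) \to \ba l(A)$ as $n \to \infty$. Lemma~\ref{lemma5.5} then guarantees that every Hausdorff-convergent subsequence of $(A_n)$ has as its limit an arc $B$ containing $\ba z$ and $h(\ba z)$, so it suffices to show $B = A$ for every such subsequence.

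The heart of the argument is to show $A_n \subseteq V_0$ for $n$ sufficiently large. For large $n$, both endpoints of $A_n$ lie in $V$ by continuity of $h$. If $A_n$ were to exit $V$, it would have to cross the boundary of $V$ and re-enter, forcing $A_n$ to traverse an entire component of $V$ and contribute at least $L$ to its $\ba d$-length; this contradicts $\ba l(A_n) \to \ba l(A)$ once $n$ is large enough that $\ba l(A_n) < L$. Being connected and contained in the product $V \cong C \times I$ with $C$ zero-dimensional, $A_n$ then lies in a single component of $V$, and the convergence of its endpoints to $\ba z, h(\ba z) \in V_0$ forces this component to converge to $V_0$ in the Hausdorff metric. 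Hence $A_n$ is a sub-arc of a single component of $V$ whose endpoints converge to $\ba z$ and $h(\ba z)$, and so it converges in Hausdorff metric to $A$. The main obstacle is that $C$ may have the coordinate $c_0$ of $V_0$ as an accumulation point, so $\bn z n$ and $h(\bn z n)$ need not lie in $V_0$ itself; the uniform component length $L$ furnished by Lemma~\ref{lemma5.1} is precisely what rules out $A_n$ jumping between components and ensures the component actually containing $A_n$ converges to $V_0$.
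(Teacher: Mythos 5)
There is a genuine gap at the step ``one verifies $\ba l(A_n)\to \ba l(A)$ as $n\to\infty$.'' The composant metric $\ba d$ is not continuous with respect to the topology of $K$: two points of $K$ can be arbitrarily close in $K$ while the arc joining them inside their composant is very long, and in particular that arc may wander out toward a folding point of the composant and fold back. Consequently $\bn{z}{n}\to\ba z$ and $h(\bn{z}{n})\to h(\ba z)$ give no control on $\ba l(A_n)=\ba d(\bn{z}{n},h(\bn{z}{n}))$ beyond the uniform bound $M$ from the earlier lemma, and that $M$ can easily exceed the component length $L$ of the tube $V$ furnished by Lemma \ref{lemma5.1} (which depends on $A$ and can be small). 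Given the uniform bound, the statement $\ba l(A_n)\to\ba l(A)$ is essentially equivalent to the conclusion of the lemma, so invoking ``continuity of $\ba d$ along the relevant parameterization'' begs the question. Everything downstream --- $\ba l(A_n)<L$ for large $n$, hence $A_n\subseteq V_0$, hence Hausdorff convergence --- collapses without it. (A smaller, patchable issue: an excursion of $A_n$ outside $V$ need not cost a full component length $L$, only the $\pi_m$-distance from the exit point to an end of the tube.)

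The scenario your argument fails to exclude is exactly the one the paper's proof is devoted to: the composant of $\ba z$ may contain a folding point $\ba x$, and the arcs $A_n$ may stretch out toward $\ba x$ before returning, so that $\limsup\{A_n\}$ properly contains $A$. Ruling this out is not a soft compactness/product-structure fact; the paper does it dynamically, using that $h$ is order-preserving on the composant and fixes folding points, introducing the reference points $\bn{s}{n_j},\bn{u}{n_j}$ and analyzing where $h^2(\bn{z}{n_j})$ can land (three cases), with a separate argument for the degenerate case $\ba z=h(\ba z)$ --- a case your proposal does not address at all, and for which the claim ``$\ba l(A_n)\to\ba l(A)=0$'' is again the entire content of the lemma. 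A correct repair would be to first prove, as the paper does, that $\limsup\{A_n\}$ contains no folding point, and only then place a tube around the limit arc (not around $A$) to get the hypothesis $A_n\cap E_m=\emptyset$ needed to apply the preceding lemma.
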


\begin{proof}
  If the composant, $C$, which contains $\ba z$ does not
  contain any folding points, then since $\limsup \{A_n\}$
  is a subarc of $C$, Lemma \ref{lemma5.5}, we have that $\limsup\{ A_n\}$ does not contain
  any folding points.  Hence the
  sequence of arcs, $\{A_n\}$, satisfies the previous lemma.

  Now suppose that $C$ contains a folding point, $\ba x$.  Without
  loss of generality, suppose that $\ba z<\ba x$, that $\limsup
  \{A_n\}\ni \ba x$, and that $\ba z\le h(\ba z)<\ba x$.  Let $m\in \N$
  and $V$ be chosen as in Lemma \ref{lemma5.1} for the arc, $A$, with endpoints $\ba
  z$ and $h(\ba z)$ (notice that if $\ba z=h(\ba z)$ then $A$ is
  a `degenerate arc.')  Let $(n_j)_{j\in\N}$ be defined so that
  there is a sequence $(\bn{t}{n_j})_{j\in \N}$, $\bn{t}{n_j}\in
  A_{n_j}$, with $\bn{t}{n_j}\to \ba x$.  We lose no generality in
  assuming that $\bn{t}{n_j}\in E_m$ for all $j\in \N$.  Let $J\in
  \N$ be large enough so that $\bn{z}{n_j}, h(\bn{z}{n_j})\in V$ for
  all $j\ge J$.  See Figure \ref{fig:first}.  We also assume that $\bn{z}{n_j}<h(\bn{z}{n_j})$. The
  case that $h(\bn{z}{n_j})<\bn{z}{n_j}$ is handled similarly.

\begin{figure}
\unitlength=10mm
\begin{picture}(12,8)(0,0)

%%Base Line

\put(1,1){\line(1,0){10}}

\put(2,1){\circle*{0.15}} \put(7.5,1){\circle*{0.15}}
\put(2,.5){\small $\ba z$}

\put(2, 3){\vector(0,-1){1.5}}

\put(7.5,.5){\small $h(\ba z)$} \put(7.5, 2.5){\vector(0,-1){1}}

 \put(9.5,.5){\small $\ba x=h(\ba x)$}

\put(9.5, 3.25){\vector(0,-1){1.5}}

\put(9.5,1){\circle*{0.15}}\put(5,1.25){\small $A$}

%%First Curve

\put(1,6){\line(1,0){7.5}} \put(8.5,5.75){\oval(.5,.5)[r]}

\put(5,6.25){\small $A_{n_j}$} \put(1,5.5){\line(1,0){7.5}}

\put(2.25,6){\circle*{0.15}}

\put(2.25, 6.25){\small $\bn{z}{n_j}$}

\put(7.15,5.5){\circle*{0.15}}

\put(7.15, 5){\small $h(\bn{z}{n_j})$}

\put(8.75,5.75){\circle*{0.15}}

\put(9, 5.75){\small $\bn{t}{n_j}$}

%%Second Curve

\put(1,4){\line(1,0){8}} \put(9,3.75){\oval(.5,.5)[r]}

\put(5,4.25){\small $A_{n_{j+1}}$}

\put(1,3.5){\line(1,0){8}}

\put(2,4){\circle*{0.15}}

\put(1.75, 4.25){\small $\bn{z}{n_{j+1}}$}

\put(7.25,3.5){\circle*{0.15}}

\put(6.8, 3){\small $h(\bn{z}{n_{j+1}})$}

\put(9.25,3.75){\circle*{0.15}}

\put(9.5, 3.75){\small $\bn{t}{n_{j+1}}$}

%%Neighborhood V

\put(1.25,.25){\dashbox{.1}(7.15,6.5)}

\put(4.5 ,6.9){\small $V$}

\end{picture}
\caption{Arrangement of points.} \label{fig:first}
\end{figure}
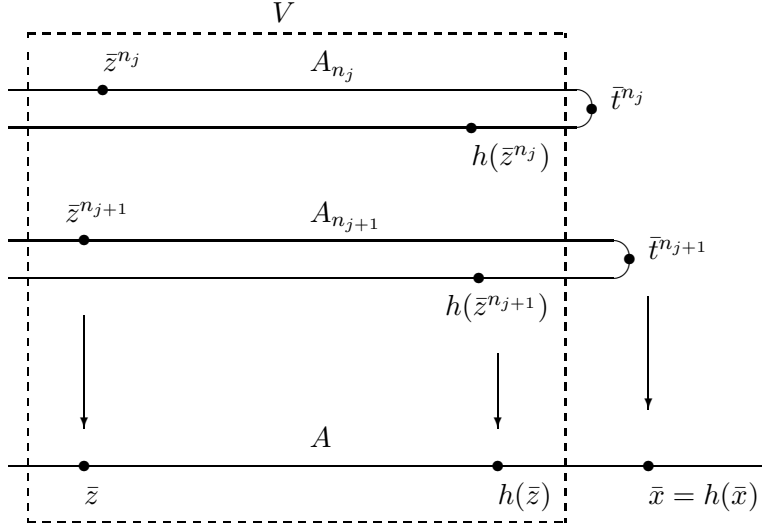

  First suppose that $\ba z\neq h(\ba z)$ so that $A$ is a
  non-degenerate arc.  Let $\delta>0$ be defined so that
  $\ba z<\ba z+\delta<h(\ba z)-\delta<h(\ba z)$.  For each $j\ge J$, %let
%  $B_{n_j}$ be the arc component of $V$ which contains
%  $\bn{z}{n_j}$.  Let
let $C_{n_j}$ be the arc component of $V$ which
  contains $h(\bn{z}{n_j})$.  %See Figure \ref{fig:second}.

  % and let $D_{n_j}$ be the arc of
  %$A_{n_j}-V$ which contains $\bn{t}{n_j}$.

%\begin{figure}
%\unitlength=10mm
%\begin{picture}(12,8)(0,0)
%
%%%Base Line
%
%\put(1,1){\line(1,0){10}}
%
%\put(2,1){\circle*{0.15}} \put(7.5,1){\circle*{0.15}}
%\put(2,.5){\small $\ba z$}
%
%
%
%\put(7.5,.5){\small $h(\ba z)$}
%
% \put(9.5,.5){\small $\ba x=h(\ba x)$}
%
%
%\put(9.5,1){\circle*{0.15}}\put(5,1.25){\small $A$}
%
%%%First Curve
%
%\put(1,6){\line(1,0){7.5}}
%
%%\put(4.5,6.2){\small $B_{n_j}$}
%
%%\put(1.25, 5.9){\small $($} \put(8.25, 5.9){\small $)$} \put(8.4,
%%5.9){\small $($}
%
%\put(.5,5.5){\small $A_{n_j}$}
%
%\put(8.5,5.5){\oval(1,1)[r]}
%
%\put(1,5){\line(1,0){7.5}}
%
%\put(4.5,4.5){\small $C_{n_j}$}
%
%\put(1.25, 4.9){\small $($} \put(8.25, 4.9){\small $)$}
%
%%\put(8.4, 4.9){\small $($}
%
%
%\put(2.25,6){\circle*{0.15}}
%
%\put(2.25, 6.25){\small $\bn{z}{n_j}$}
%
%\put(7.15,5){\circle*{0.15}}
%
%\put(7.15, 4.5){\small $h(\bn{z}{n_j})$}
%
%\put(9,5.5){\circle*{0.15}}
%
%\put(9.25, 5.5){\small $\bn{t}{n_j}$}
%
%%\put(9.25,5){\small $D_{n_j}$}
%
%
%
%%%Neighborhood V
%
%\put(1.25,.25){\dashbox{.1}(7.15,6.5)}
%
%\put(4.5 ,6.9){\small $V$}
%
%\end{picture}
%\caption{Labeling of $A_{n_j}$.} \label{fig:second}
%\end{figure}

%%Original with every point defined

  %Let $\bpn{z}{n_j}\in
%  C_{n_j}$ such that $$\pi_m(\bn{z}{n_j})=\pi_m(\bpn{ z}{n_j})$$  Define $\bn{s}{n_j}\in B_{n_j}$ and $\bpn{s}{n_j}\in C_{n_j}$ such that
%  $$\pi_m(\bn{s}{n_j})=\pi_m(\bpn{ s}{n_j})=\pi_m(\ba{z}+\delta)$$  Also define
%  $\bn{u}{n_j}\in B_{n_j}$ and $\bpn{u}{n_j}\in C_{n_j}$ such that
%  $$\pi_m(\bn{u}{n_j})=\pi_m(\bpn{u}{n_j})=\pi_m(h(\ba{z})-\delta)$$

\begin{figure}
\unitlength=10mm
\begin{picture}(12,8)(0,0)

%%Base Line

\put(1,1){\line(1,0){10}}

\put(2,1){\circle*{0.15}} \put(2,.5){\small $\ba z$}
\put(3,1){\circle*{0.15}}\put(3,.5){\small $\ba z+\delta$}

%%Dashed Line 1
\put(3,1.2){\line(0,1){.1}}\put(3,1.4){\line(0,1){.1}}\put(3,1.6){\line(0,1){.1}}\put(3,1.8){\line(0,1){.1}}\put(3,2){\line(0,1){.1}}

\put(3,2.2){\line(0,1){.1}}\put(3,2.4){\line(0,1){.1}}\put(3,2.6){\line(0,1){.1}}\put(3,2.8){\line(0,1){.1}}\put(3,3){\line(0,1){.1}}

\put(3,3.2){\line(0,1){.1}}\put(3,3.4){\line(0,1){.1}}\put(3,3.6){\line(0,1){.1}}\put(3,3.8){\line(0,1){.1}}\put(3,4){\line(0,1){.1}}

\put(3,4.2){\line(0,1){.1}}\put(3,4.4){\line(0,1){.1}}\put(3,4.6){\line(0,1){.1}}\put(3,4.8){\line(0,1){.1}}\put(3,5){\line(0,1){.1}}

%\put(3,5.2){\line(0,1){.1}}\put(3,5.4){\line(0,1){.1}}\put(3,5.6){\line(0,1){.1}}\put(3,5.8){\line(0,1){.1}}

\put(7.5,1){\circle*{0.15}} \put(7.5,.5){\small $h(\ba z)$}

\put(6.5,1){\circle*{0.15}}\put(5.75, .5){\small $h(\ba z)-\delta$}

%%Dashed Line 2

\put(6.5,1.2){\line(0,1){.1}}\put(6.5,1.4){\line(0,1){.1}}\put(6.5,1.6){\line(0,1){.1}}\put(6.5,1.8){\line(0,1){.1}}\put(6.5,2){\line(0,1){.1}}

\put(6.5,2.2){\line(0,1){.1}}\put(6.5,2.4){\line(0,1){.1}}\put(6.5,2.6){\line(0,1){.1}}\put(6.5,2.8){\line(0,1){.1}}\put(6.5,3){\line(0,1){.1}}

\put(6.5,3.2){\line(0,1){.1}}\put(6.5,3.4){\line(0,1){.1}}\put(6.5,3.6){\line(0,1){.1}}\put(6.5,3.8){\line(0,1){.1}}\put(6.5,4){\line(0,1){.1}}

\put(6.5,4.2){\line(0,1){.1}}\put(6.5,4.4){\line(0,1){.1}}\put(6.5,4.6){\line(0,1){.1}}\put(6.5,4.8){\line(0,1){.1}}\put(6.5,5){\line(0,1){.1}}

%\put(6.5,5.2){\line(0,1){.1}}\put(6.5,5.4){\line(0,1){.1}}\put(6.5,5.6){\line(0,1){.1}}\put(6.5,5.8){\line(0,1){.1}}

 \put(9.5,.5){\small $\ba x=h(\ba x)$}

\put(9.5,1){\circle*{0.15}}\put(5,1.25){\small $A$}

%%First Curve

\put(1,6){\line(1,0){7.5}}

\put(7,6.2){\small $B_{n_j}$}

%\put(1.25, 5.9){\small $($} \put(8.25, 5.9){\small $)$} \put(8.4,
%5.9){\small $($}
%\put(.5,5.5){\small $A_{n_j}$}

\put(8.5,5.5){\oval(1,1)[r]}

\put(1,5){\line(1,0){7.5}} \put(1.75,4.5){\small $C_{n_j}$}

\put(1.25, 4.9){\small $($} \put(8.25, 4.9){\small $)$}

%\put(8.4, 4.9){\small $($ }

\put(2.25,6){\circle*{0.15}}

\put(2.25, 6.25){\small $\bn{z}{n_j}$}

\put(7.15,5){\circle*{0.15}}

\put(7.15, 4.5){\small $h(\bn{z}{n_j})$}

\put(9,5.5){\circle*{0.15}}

\put(9.25, 5.5){\small $\bn{t}{n_j}$}

%\put(9.25,5){\small $D_{n_j}$}

%\put(3,6){\circle*{0.15}} \put(3.2, 6.25){\small $\bn{s}{n_j}$}

%\put(6.5, 6){\circle*{0.15}} \put(6,6.25){\small $\bn{u}{n_j}$}

\put(3,5){\circle*{0.15}} \put(3.2, 4.5){\small $\bn{s}{n_j}$}

\put(6.5,5){\circle*{0.15}}\put(5.9, 4.5){\small $\bn{u}{n_j}$}

%%Neighborhood V

\put(1.25,.25){\dashbox{.05}(7.15,6.5)}

\put(4.5 ,6.9){\small $V$}

\end{picture}
\caption{Reference points on $A_{n_j}$.} \label{fig:third}
\end{figure}
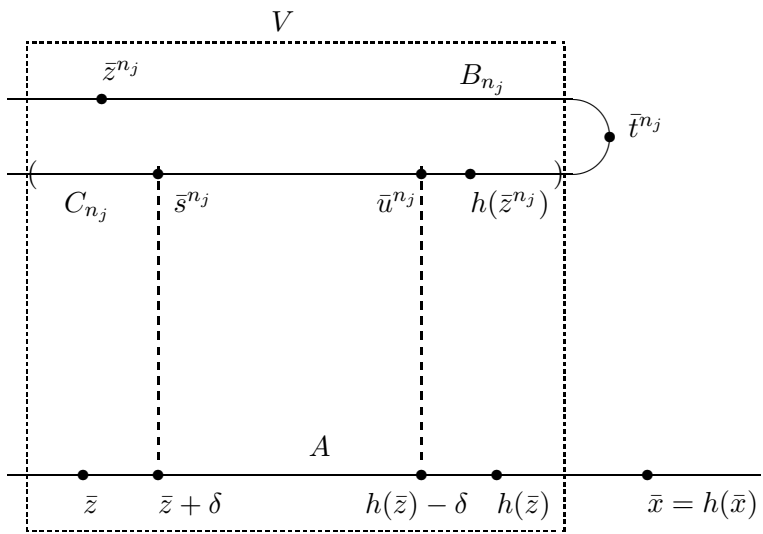

 Define $\bn{s}{n_j}\in C_{n_j}$  such that
  $$\pi_m(\bn{s}{n_j})=\pi_m(\ba{z}+\delta)$$  Also define
  $\bn{u}{n_j}\in C_{n_j}$ such that
  $$\pi_m(\bn{u}{n_j})=\pi_m(h(\ba{z})-\delta)$$

   See Figure \ref{fig:third}.

    Consider
  $h^2(\bn{z}{n_j})$.  Since $\bn{z}{n_j}<h(\bn{z}{n_j})$ and $h$ is
  order-preserving, we have $$\bn{z}{n_j}<h(\bn{z}{n_j})<h^2(\bn{z}{n_j})$$
   Since the arc $A_{n_j}$ has only one $m$-point, we have
    \begin{enumerate}
     \item $\bn{u}{n_j}<h^2(\bn{z}{n_j})<\bn{s}{n_j}$ for infinitely
     many $j\ge J$,
     \item $\bn{s}{n_j}<h^2(\bn{z}{n_j})$ for infinitely many $j\ge
     J$, or
     \item $h^2(\bn{z}{n_j})<\bn{u}{n_j}$ for all $j\ge
     J$.
   \end{enumerate}

  In case (1) we pass to a subsequence of $\bn{z}{n_j}$ if
  necessary.  Then there is some $\ba w\in A$ with
  $h^2(\bn{z}{n_j})\to \ba w$ and $\ba z+\delta\le w\le h(\ba
  z)-\delta$.  Thus $\ba w\nin\{ \ba z, h(\ba z)\}$.  Since $h$ is
  continuous though, $h(\bn{z}{n_j})\to h^{-1}(\ba w)$.  Hence
  $h^{-1}(\ba w)=h(\ba z)$.  This implies that $\ba w=h^2(\ba z)$ and $\ba
  z<h^2(\ba z)<h(\ba z)$, which contradicts the fact that $h$ is
  order-preserving.

\begin{figure}
\unitlength=10mm
\begin{picture}(12,8)(0,0)

%%Base Line

\put(1,1){\line(1,0){10}}

\put(2,1){\circle*{0.15}} \put(2,.5){\small $\ba z$}
\put(3,1){\circle*{0.15}}\put(3,.5){\small $\ba z+\delta$}

%%Dashed Line 1
\put(3,1.2){\line(0,1){.1}}\put(3,1.4){\line(0,1){.1}}\put(3,1.6){\line(0,1){.1}}\put(3,1.8){\line(0,1){.1}}\put(3,2){\line(0,1){.1}}

\put(3,2.2){\line(0,1){.1}}\put(3,2.4){\line(0,1){.1}}\put(3,2.6){\line(0,1){.1}}\put(3,2.8){\line(0,1){.1}}\put(3,3){\line(0,1){.1}}

\put(3,3.2){\line(0,1){.1}}\put(3,3.4){\line(0,1){.1}}\put(3,3.6){\line(0,1){.1}}\put(3,3.8){\line(0,1){.1}}\put(3,4){\line(0,1){.1}}

\put(3,4.2){\line(0,1){.1}}\put(3,4.4){\line(0,1){.1}}\put(3,4.6){\line(0,1){.1}}\put(3,4.8){\line(0,1){.1}}\put(3,5){\line(0,1){.1}}

%\put(3,5.2){\line(0,1){.1}}\put(3,5.4){\line(0,1){.1}}\put(3,5.6){\line(0,1){.1}}\put(3,5.8){\line(0,1){.1}}

\put(7.5,1){\circle*{0.15}} \put(7.5,.5){\small $h(\ba z)$}

\put(6.5,1){\circle*{0.15}}\put(5.75, .5){\small $h(\ba z)-\delta$}

%%Dashed Line 2

\put(6.5,1.2){\line(0,1){.1}}\put(6.5,1.4){\line(0,1){.1}}\put(6.5,1.6){\line(0,1){.1}}\put(6.5,1.8){\line(0,1){.1}}\put(6.5,2){\line(0,1){.1}}

\put(6.5,2.2){\line(0,1){.1}}\put(6.5,2.4){\line(0,1){.1}}\put(6.5,2.6){\line(0,1){.1}}\put(6.5,2.8){\line(0,1){.1}}\put(6.5,3){\line(0,1){.1}}

\put(6.5,3.2){\line(0,1){.1}}\put(6.5,3.4){\line(0,1){.1}}\put(6.5,3.6){\line(0,1){.1}}\put(6.5,3.8){\line(0,1){.1}}\put(6.5,4){\line(0,1){.1}}

\put(6.5,4.2){\line(0,1){.1}}\put(6.5,4.4){\line(0,1){.1}}\put(6.5,4.6){\line(0,1){.1}}\put(6.5,4.8){\line(0,1){.1}}\put(6.5,5){\line(0,1){.1}}

%\put(6.5,5.2){\line(0,1){.1}}\put(6.5,5.4){\line(0,1){.1}}\put(6.5,5.6){\line(0,1){.1}}\put(6.5,5.8){\line(0,1){.1}}

 \put(9.5,.5){\small $\ba x=h(\ba x)$}
\put(4.5, 1){\circle*{0.15}}\put(4.25, .5){\small $\ba w$}

\put(9.5,1){\circle*{0.15}}\put(5,1.25){\small $A$}

%%First Curve

\put(1,6){\line(1,0){7.5}}

%\put(7,6.2){\small $B_{n_j}$}

%\put(1.25, 5.9){\small $($} \put(8.25, 5.9){\small $)$} \put(8.4,
%5.9){\small $($}
%\put(.5,5.5){\small $A_{n_j}$}

\put(8.5,5.5){\oval(1,1)[r]}

\put(1,5){\line(1,0){7.5}} \put(1.75,4.5){\small $C_{n_j}$}

\put(1.25, 4.9){\small $($} \put(8.25, 4.9){\small $)$}

%\put(8.4, 4.9){\small $($ }

\put(2.25,6){\circle*{0.15}}

\put(2.25, 6.25){\small $\bn{z}{n_j}$}

\put(7.15,5){\circle*{0.15}}

\put(7.15, 4.5){\small $h(\bn{z}{n_j})$}

\put(9,5.5){\circle*{0.15}}

\put(9.25, 5.5){\small $\bn{t}{n_j}$}

% \put(9.25,5){\small $D_{n_j}$}

%\put(3,6){\circle*{0.15}} \put(3.2, 6.25){\small $\bn{s}{n_j}$}

%\put(6.5, 6){\circle*{0.15}} \put(6,6.25){\small $\bn{u}{n_j}$}

\put(3,5){\circle*{0.15}} \put(3.2, 4.5){\small $\bn{s}{n_j}$}

\put(4.5, 5){\circle*{0.15}}\put(4.25, 4.5){\small
$h^2(\bn{z}{n_j})$}

\put(4.5, 4){\vector(0,-1){2.25}}

\put(6.5,5){\circle*{0.15}}\put(5.9, 4.5){\small $\bn{u}{n_j}$}

%%Neighborhood V

\put(1.25,.25){\dashbox{.05}(7.15,6.5)}

\put(4.5 ,6.9){\small $V$}

\end{picture}
\caption{Case (1).} \label{fig:fourth}
\end{figure}

  In case (2) there is a point $\bn{k}{n_j}$ in between
  $\bn{u}{n_j}$ and $\bn{s}{n_j}$ such that $h^{-1}(\bn{k}{n_j})$
  is between $\bn{z}{n_j}$ and $h (\bn{z}{n_j})$.  Let $\ba w\in A$ be
  such that $\bn{k}{n_j}\to \ba w$.  Then we again have that $\ba
  z+\delta\le \ba w\le h(\ba z)-\delta$.  So $\ba w\nin\{ \ba z, h(\ba
  z)\}$.  Since $\ba z<\ba w< h(\ba z)$ we know that $h^{-1}(\ba
  z)<h^{-1}(\ba w)<\ba z$.  Let $\delta \ge \gamma>0$ be such that $h^{-1}(\ba w)< \ba
  z-\gamma$.  Since $h^{-1}(\bn{k}{n_j})$
  is between $\bn{z}{n_j}$ and $h (\bn{z}{n_j})$ and since
  $\bn{z}{n_j}\to \ba z>\ba z-\gamma$, it must be the case that
  $h^{- 1}(\bn{k}{n_j})\to h^{-1}(\ba w)\ge \ba z-\gamma$, a
  contradiction.

  Notice that in both case (1) and (2) we get a contradiction from
  the assumption that $\limsup \{A_n\}\ni \ba x$, a folding point, and
  the contradiction will follow if we are in case (1) or (2) for any
  $\delta'\le \delta$.  So if there is some $\delta'\le \delta$ such
  that (1) or (2) holds then we are finished.  So for case (3), suppose instead
  that for all $\delta'\le \delta$ we have $$h(\bn{z}{n_j})<h^2(\bn{z}{n_j})\le \bn{u}{n_j}$$
  for all $j\ge J'$ (where the points $\bn{u}{n_j}$ now depends
  on the choice of $\delta'$.)  Moreover, since $\bn{u}{n_j}\to
  h(\ba z)$ as $\delta'\to 0$ and $j\to \infty$.  Thus we have
  $h^2(\bn{z}{n_j})\to h(\ba z)$ and $h(\bn{z}{n_j})\to h(\ba z)$.  This
  implies that $h^2(\ba z)=h(\ba z)$, a contradiction since $h$ is a
  homeomorphism and $h(\ba z)\neq \ba z$.

  Thus, under the assumption that $\ba z\neq h(\ba z)$ we have shown
  that $\limsup \{A_n\}$ does not contain a folding point of $K$.

  Now suppose that $\ba z=h(\ba z)$.  Choose some point $\ba w$ such that $\ba z<\ba w\le h(\ba w)\le
  \ba x$ where $\ba x$ is the nearest folding point greater than $\ba z$.  Let $A_0$ be
  the arc in $C$ with endpoints $\ba z$ and $h(\ba w)$.  Notice that since $h$ is order-preserving, $\ba z <h^{-1}(\ba
  w)$.    Let $V$ be a neighborhood of $A_0$ and
  $m\in \N$ be chosen as in Lemma \ref{lemma5.1}.
  Suppose that $\limsup \{A_n\}\ni \ba x$.  Let $(n_j)_{j\in \N}$
  be chosen such that $\bn{t}{n_j}\in E_m$, $\bn{t}{n_j}\to \ba x$
  and $\bn{t}{n_j}\in A_{n_j}$.  Let $\delta>0$ be such that $\ba
  z<\ba z+\delta\le h(\ba z+\delta)<h^{-1}(\ba w-\delta)\le
  \ba w-\delta<\ba w$.

  Let $C_{n_j}$ be defined as before. Let $\bn{u}{n_j}$ be the point in
  $C_{n_j}$ with the same $m$th-coordinate as $h(\ba z+\delta)$, and
  let $\bn{s}{n_j}$ be the point in $C_{n,j}$ with the same $m$th
  -coordinate as $h^{-1}(\ba w-\delta)$.  Let $\gamma_{n_j}>0$ be
  defined so that $\bn{u}{n_j}+\gamma_{n_j}<\bn{s}{n_j}-\gamma_{n_j}$.  Let $\bn{v}{n_j}\in C_{n_j}$ such that
  $$\bn{u}{n_j}+\gamma_{n_j}<\bn{v}{n_j}<\bn{s}{n_j}-\gamma_{n_j}$$  Then
  $\bn{v}{n_j}\to \ba v\in A_0$ (or at least a subsequence of $\bn{v}{n_j}$) with $$h(\ba z+\delta)\le \ba v\le h^{-1}(\ba w-\delta)$$
  Since $\bn{z}{n_j}<\bn{v}{n_j}<h (\bn{z}{n_j})$, we have
  $h^{-1}(\bn{z}{n_j})<h^{-1}(\bn{v}{n_j})<\bn{z}{n_j}$.  Thus
  $h^{-1}(\bn{v}{n_j})$ converges to some point less than $\ba
  z+\delta$.  So $h^{-1}(\ba v)<\ba z+\delta$.  This is a
  contradiction to the fact that $h$ is order preserving.

\end{proof}

\begin{figure}
\unitlength=10mm
\begin{picture}(10,10)(0,0)

%Baseline

\put(1,1){\line(1,0){8}}

\put(2,1){\circle*{0.15}} \put(2,.5){\small $\ba t$}
\put(2,4.75){\vector(0,-1){2}}

\put(5,1){\circle*{0.15}} \put(4.5,.5){\small $\ba z=h(\ba z)$}
\put(5,4.75){\vector(0,-1){2}}

\put(2.5,.1){\dashbox{.1}(5,9)} \put(3,9.4){\small $B_\epsilon(\ba
z)$}

\put(8,1){\circle*{0.15}} \put(8,.5){\small $h(\ba
t)$}\put(8,4.75){\vector(0,-1){2}}

%%First Curve

\put(2.25,8){\line(1,0){7.5}} \put(4.75,8){\circle*{0.15}}
\put(4.75,8.2){\small $\bn{z}{n_j}$}

\put(2.25,7.75){\oval(.5,.5)[l]}\put(2,7.75){\circle*{0.15}}
\put(1.5,7.75){\small $\bn{t}{n_j}$}

\put(2.25,7.5){\line(1,0){7.5}}\put(5.25,7.5){\circle*{0.15}}
\put(5, 7){\small $h(\bn{z}{n_j})$} \put(8.1, 7.5){\circle*{0.15}}
\put(8, 7){\small $h(\bn{t}{n_j})$}

%%Second curve

%\put(1.75,6){\line(1,0){8}}
%\put(1.75,5.75){\oval(.5,.5)[l]}\put(1.75,5.5){\line(1,0){8}}

\put(1.75,6){\line(1,0){8}} \put(5,6){\circle*{0.15}}
\put(5,6.2){\small $\bn{z}{n_{j+1}}$}

\put(1.75,5.75){\oval(.5,.5)[l]}\put(1.5,5.75){\circle*{0.15}}
\put(.5,5.75){\small $\bn{t}{n_{j+1}}$}

\put(1.75,5.5){\line(1,0){7.5}}\put(4.75,5.5){\circle*{0.15}}
\put(4.5, 5){\small $h(\bn{z}{n_{j+1}})$} \put(8.75,
5.5){\circle*{0.15}} \put(8.5, 5){\small $h(\bn{t}{n_{j+1}})$}

%%Original--wrong orientation
%%First Curve
%
%\put(1.25,1){\line(0,1){5}}
%
%\put(1.25,5.5){\circle*{0.15}}
%
%\put(0.65,5.5){\small $\bn{z}{n_j}$}
%
%\put(1.5,1){\oval(0.5,0.5)[b]}
%
%\put(1.5,.75){\circle*{0.15}}
%
%\put(1.5,.25){\small $\bn{t}{n_j}$}
%
%\put(1.75,1){\line(0,1){7}}
%
%
%\put(1.75,4.5){\circle*{0.15}}
%
%\put(2,4.5){\small $h(\bn{z}{n_j})$}
%
%\put(1.75,7.65){\circle*{0.15}}
%
%\put(2,7.65){\small $h(\bn{t}{n_j})$}
%
%
%
%%
%%\put(.75,16){\line(0,1){1}}
%
%%\put(1, 17){\oval(0.5,0.5)[t]}
%%%Second Curve
%\put(3.2,2){\line(0,1){3}}
%
%\put(3.2,3.5){\circle*{0.15}}
%
%\put(2.35,3.5){\small $\bn{z}{n_{j+1}}$}
%
%\put(3.45,2){\oval(0.5,0.5)[b]}
%
%\put(3.45,1.75){\circle*{0.15}}
%
%\put(3.45,1.25){\small $\bn{t}{n_{j+1}}$}
%
%\put(3.7,2){\line(0,1){7}}
%
%
%\put(3.7,4.5){\circle*{0.15}}
%
%\put(3.95,4.5){\small $h(\bn{z}{n_{j+1}})$}
%
%\put(3.7,6.8){\circle*{0.15}}
%
%\put(3.95,6.8){\small $h(\bn{t}{n_{j+1}})$}
%
%%%limit curve
%
%\put(7,.5){\line(0,1){8.5}}
%
%\put(7,4){\circle*{0.15}}
%
%\put(8,4){\small $h(\ba z)=\ba z$}
%
%\put(5,4){\vector(1,0){1.25}}
%
% \put(7,1){\circle*{0.15}}
%
%\put(8,1){\small $\ba t$}
%
%\put(5,1){\vector(1,0){1.25}}
%
%\put(7,7.4){\circle*{0.15}}
%
%\put(8,7.4){\small $h(\ba t)$}
%
%\put(5,7.4){\vector(1,0){1.25}}

\end{picture}
\caption{Proof of Lemma~\ref{zfoldpoint}.} \label{fig:fold}
\end{figure}

\begin{lemma}\label{zfoldpoint}
  Let $\ba z\in F$.  Let $\bn{z}{n}\to \ba z$.  Let $A_n$ be the arc
  in $K$ with endpoints $\bn{z}{n}$ and $h(\bn{z}{n})$.  Then $A_n\to
  \{\ba z\}$ in the Hausdorff metric.
\end{lemma}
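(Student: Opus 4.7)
The plan is to argue by contradiction, paralleling the case analysis in Lemma~\ref{znotfoldpoint} and ultimately invoking that lemma itself for the final step. Two preliminary observations are needed. First, the earlier lemma $h(\ba x)=\sh^{-b}(\ba x)$ for $\ba x\in F$, in the reduced setting of this section where the shift has been absorbed into $h$, gives $h(\ba z)=\ba z$, so both endpoints $\bn z_n$ and $h(\bn z_n)$ of $A_n$ converge to $\ba z$. Second, the uniform bound $\ba d(\sh^{-b}(\ba y),h(\ba y))\le M$ from the last lemma of Section~2 gives $\ba d(\bn z_n,h(\bn z_n))\le M$, so the $\ba d$-lengths of the arcs $A_n$ are uniformly bounded.

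Suppose for contradiction that $A_n\not\to\{\ba z\}$ in Hausdorff distance. Pass to a subsequence so that $A_n$ converges to a non-degenerate continuum; by Lemma~\ref{lemma5.5} this limit is an arc $B$ of $\ba d$-length at most $2M$ containing $\ba z$, and by connectedness $B$ lies in the composant $C$ of $\ba z$. I then select a point $\ba t\in B$ with $\ba t\neq\ba z$ and $\ba t\notin F$; such a choice is available because the folding set $F$ is nowhere dense in $K$ in the non-recurrent case, so non-folding points are dense in any non-degenerate arc. Without loss of generality $\ba z<\ba t$ in the order on $C$, and pick $\bn t_n\in A_n$ with $\bn t_n\to\ba t$. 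After a further subsequence assume $\bn z_n<h(\bn z_n)$ (the opposite case being handled symmetrically), so that $A_n=[\bn z_n,h(\bn z_n)]$ and $\bn z_n\le\bn t_n\le h(\bn z_n)$.

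The crucial step is the following. Order preservation of $h$ on $C$, established in an earlier lemma, gives $h(\bn z_n)\le h(\bn t_n)\le h^2(\bn z_n)$, so the arc $A''_n$ from $\bn t_n$ to $h(\bn t_n)$ passes through the intermediate point $h(\bn z_n)$. Since $\ba t\notin F$ and $\bn t_n\to\ba t$, Lemma~\ref{znotfoldpoint} applies to the sequence $A''_n$ and yields $A''_n\to A_{\ba t}$ in Hausdorff distance, where $A_{\ba t}$ is the arc from $\ba t$ to $h(\ba t)$. Because $h(\bn z_n)\to\ba z$ and $h(\bn z_n)\in A''_n$ for every $n$, we conclude $\ba z\in A_{\ba t}$, that is, $\ba z$ lies on the arc from $\ba t$ to $h(\ba t)$.

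This produces the desired contradiction: order preservation of $h$, combined with $\ba z<\ba t$ and $h(\ba z)=\ba z$, gives $\ba z=h(\ba z)<h(\ba t)$, so both $\ba t$ and $h(\ba t)$ strictly exceed $\ba z$ in $C$, and $\ba z$ cannot lie between them. The main obstacle I anticipate is verifying the existence of a non-folding reference point $\ba t\in B\setminus\{\ba z\}$; once that is in hand, the argument reduces to a clean application of Lemma~\ref{znotfoldpoint} together with the order structure on the composant.
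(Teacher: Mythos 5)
Your proof is correct in outline, and it reaches the contradiction by a genuinely different mechanism than the paper's. Both arguments extract a point $\ba t\neq \ba z$ as a limit of points $\bn{t}{n}\in A_n$ staying away from $\ba z$, and both exploit the same order computation $\bn{z}{n}\le \bn{t}{n}\le h(\bn{z}{n})\Rightarrow \bn{t}{n}\le h(\bn{z}{n})\le h(\bn{t}{n})$. But the paper takes $\ba t$ to be a limit of $m$-points of the $A_n$ (it never needs $\ba t\nin F$) and passes the order inequality directly to the limit to get $\ba z\le h(\ba t)$ against $h(\ba t)<h(\ba z)=\ba z$; you instead build the limit arc $B=\limsup\{A_n\}$ via the uniform bound $M$ and Lemma~\ref{lemma5.5}, choose $\ba t\in B\setminus(F\cup\{\ba z\})$, and feed the arcs from $\bn{t}{n}$ to $h(\bn{t}{n})$ into Lemma~\ref{znotfoldpoint} to force $\ba z$ onto the arc from $\ba t$ to $h(\ba t)$, which order-preservation forbids. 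Your route has the advantage that membership passes cleanly through Hausdorff limits, so you avoid the delicate step of passing the composant order to a limit; the cost is that you must invoke Lemma~\ref{znotfoldpoint} and hence need $\ba t\nin F$.

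That requirement is the one place your argument needs shoring up. ``$F$ is nowhere dense in $K$'' does not by itself give that $F\cap B$ is nowhere dense \emph{in the arc} $B$, because every arc already has empty interior in $K$. The correct justification is one level down: if $F\cap B$ contained a nondegenerate subarc $B'$, then each $\pi_n(B')$ would be a subinterval of $\w(1/2)$, and these cannot all be degenerate; but $\w(1/2)$ is a closed $T$-invariant set avoiding $1/2$ (non-recurrence), so any nondegenerate interval inside it would have iterates growing until they contain $1/2$, a contradiction. Hence $F\cap B$ is closed with empty interior in $B$ and a point $\ba t\in B\setminus(F\cup\{\ba z\})$ exists. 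With that repair, and with the symmetric treatment of the case $h(\bn{z}{n})<\bn{z}{n}$ spelled out, the proof is complete.
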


\begin{proof}
  Let $\epsilon>0$.  We will show that there is some $N\in \N$ such
  that if $n\ge N$ then $A_n\subseteq B_\epsilon(\ba z)$.  Notice that
  if there is some $m, N\in \N$ such that $A_n\cap E_m=\emptyset$
  for all $n\ge N$ then there is some $N'\ge N$ such that
  $A_n\subseteq B_\epsilon(\ba z)$ for all $n\ge N'$.  So instead
  suppose that for all $m\in \N$ and for all $N\in \N$ there is some
  $n\ge N$ such that $A_n\cap E_m\nempty$.  Fix $m\in \N$.  Suppose
  that there is some $N\in \N$ such that for all $n\ge N$, $A_n\cap
  E_m=\{\bn{t}{n}\}\subseteq B_\epsilon(\ba z)$.  Then for all $n\ge
  N$ we have $A_n\subseteq B_\epsilon(\ba z)$ and we are finished.
  So suppose we can pass to a subsequence, $(n_j)_{j\in \N}$ such
  that $\bn{t}{n_j}\nin B_\epsilon(\ba z)$ for all $j\in \N$.  By
  passing to another subsequence if necessary, let $\ba t$ be the
  limit of the $\bn{t}{n_j}$s.  Then we see that $\ba t$ is in the
  composant of $\ba z$, $C$, and $\ba t\neq \ba z$.  Without loss of
  generality, assume that $\ba t<\ba z$.  Then since $\ba z\in F$,
  $h(\ba z)=\ba z$ and so $\ba t\le h(\ba t)<\ba z$.  Assume that $\ba
  t, h(\ba t)\nin B_\epsilon(\ba z)$ (take $\epsilon$ smaller if
  necessary).  Let $J\in \N$ be large enough so that $\bn{z}{n_j},
  h(\bn{z}{n_j})\in B_{\epsilon/2}(\ba z)$ for all $j\ge J$.

  Since $\bn{t}{n_j}\in E_m$, $h(\bn{t}{n_j})$ is on an arc component
  of a link of $\C C_{p,n}$ that does not contain any other
  $p$-points, and hence no other $m$-points.  Since
  $\bn{z}{n_j}<\bn{t}{n_j}<h(\bn{z}{n_j})$ we have that $\bn{t}{n_j}<h (\bn{z}{n_j})<
  h(\bn{t}{n_j})$.  See Figure \ref{fig:fold}.  This implies that $h(\bn{t}{n_j})\to h(\ba t)$ and
  $\ba z=h(\ba z)\le h(\ba t)$, a contradiction. Thus we have shown that
  $A_n\to \{\ba z\}$ in the Hausdorff metric.
\end{proof}

Now define $$H:K\times[0,1]\to K$$ by the following.  For all $\ba
x\in F$ let $$H(\ba x, t)=\ba x=h(\ba x)$$  For $x\nin F$ we have
two cases: either $h(\ba x)=\ba x$ or $h(\ba x)\neq \ba x$.  If
$h(\ba x)=\ba x$ then define $$H(\ba x, t)=\ba x=h(\ba x)$$  If
instead we have $h(\ba x)\neq \ba x$, then let $A$ be the arc from
$\ba x$ to $h(\ba x)$, and let $V$ and $m$ be defined as in Lemma
\ref{lemma5.1}. Then let
$$H(\ba x, t)=\pi_m^{-1}|_{A}\left[(1-t)\pi_m(\ba x)+t(\pi_m(h(\ba x))\right]$$

\begin{theorem}
  The homeomorphism $h$ is isotopic to the identity via the isotopy $H$.
\end{theorem}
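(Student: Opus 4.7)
The plan is to verify four properties of the map $H$: well-definedness, the boundary conditions $H(\cdot,0) = \mathrm{id}_K$ and $H(\cdot,1) = h$, joint continuity on $K \times [0,1]$, and the fact that each time slice $H_t := H(\cdot, t)$ is a homeomorphism of $K$.

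For well-definedness, the only branch that requires checking is $\ba{x} \notin F$ with $h(\ba{x}) \neq \ba{x}$. As in the opening of the proof of Lemma~\ref{znotfoldpoint}, the arc $A$ from $\ba{x}$ to $h(\ba{x})$ contains no folding points, since any such point would be fixed by $h$ and would force $h$ to be order-reversing on its composant. Hence Lemma~\ref{lemma5.1} supplies the neighborhood $V$ and integer $m$ with $\pi_m|_A$ a homeomorphism, making the convex-combination formula meaningful. Substituting $t = 0$ and $t = 1$ immediately yields the boundary conditions on all three branches of the definition.

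The heart of the argument is joint continuity of $H$ at an arbitrary pair $(\ba{z}, t)$; I would split this into three cases matching the definition of $H$. First, if $\ba{z} \notin F$ and $h(\ba{z}) \neq \ba{z}$, then for any sequence $(\bn{x}{n}, t_n) \to (\ba{z}, t)$ Lemma~\ref{znotfoldpoint} gives Hausdorff convergence of the arcs $A_n$ with endpoints $\bn{x}{n}$ and $h(\bn{x}{n})$ to the arc $A$, so eventually $A_n \subseteq V$ and $\pi_m|_{A_n}$ is a homeomorphism. Continuity of $h$ and of $\pi_m$, combined with the product structure of $V$ furnished by Lemma~\ref{lemma5.1}, then lets me invert $\pi_m$ continuously across the $A_n$, forcing $H(\bn{x}{n}, t_n) \to H(\ba{z}, t)$. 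Second, if $\ba{z} \notin F$ with $h(\ba{z}) = \ba{z}$, then $A$ degenerates to $\{\ba{z}\}$, and Lemma~\ref{znotfoldpoint} still forces $A_n \to \{\ba{z}\}$; since $H(\bn{x}{n}, t_n) \in A_n$, a squeezing argument delivers the limit. Third, if $\ba{z} \in F$, Lemma~\ref{zfoldpoint} gives $A_n \to \{\ba{z}\}$ directly and the same squeezing applies.

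It remains to show each $H_t$ is a homeomorphism. Since $K$ is compact Hausdorff, it suffices to establish bijectivity, for continuity of $H_t$ is inherited from $H$. Every composant of $K$ is invariant under both $h$ and $H_t$ (the arc $A$ lies in the composant of $\ba{x}$), so I would prove injectivity composant-by-composant. For nearby $\ba{x} < \ba{y}$ on a common composant one may choose a single neighborhood $V$ and integer $m$ that work simultaneously for $A_{\ba{x}}$ and $A_{\ba{y}}$; because $h$ is order-preserving and $\pi_m$ is monotone on the union of the two arcs, the $m$-th coordinates of $H_t(\ba{x})$ and $H_t(\ba{y})$ are distinct convex combinations of monotonically ordered reals and hence unequal. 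Surjectivity then follows either by running the same construction for $h^{-1}$ to produce a continuous inverse, or from continuity and compactness once injectivity is known. The hardest step is the continuity check at points $\ba{z} \notin F$ with $h(\ba{z}) = \ba{z}$, where the parametrizing formula must collapse to the constant $\ba{z}$ even though nearby points $\bn{x}{n}$ are moved along nondegenerate arcs $A_n$; Lemma~\ref{znotfoldpoint}, in forcing those arcs to collapse, is precisely what rescues the argument.
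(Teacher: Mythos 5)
Your proposal is correct and follows essentially the same route as the paper: joint continuity of $H$ via the Hausdorff convergence of the arcs $A_n$ supplied by Lemmas~\ref{znotfoldpoint} and~\ref{zfoldpoint}, and injectivity of each slice $h_t$ by passing to a single arc on which some $\pi_m$ is a homeomorphism and using order-preservation of $h$ to compare the convex combinations, with surjectivity dispatched briefly. Your explicit treatment of the degenerate case $h(\ba z)=\ba z$, $\ba z\nin F$ (squeezing $H(\bn{x}{n},t_n)\in A_n\to\{\ba z\}$) is a slightly more careful reading of the same argument the paper states for all $\ba z\nin F$ at once.
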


\begin{proof}
  First we need to show that $H:K\times [0,1]\to K$ as defined above is continuous.  Let $\ba z\in K$, $t\in [0,1]$, and let $(\bn{z}{n}, t_n)\to (\ba
  z, t)$.  Suppose that $\ba z\nin F$.  Then by Lemma
  \ref{znotfoldpoint} we know that since $\bn{z}{n}\to \ba z$ and
  $h(\bn{z}{n})\to h(\ba z)$, the arcs $A_n\to A$ in the Hausdorff
  metric.  So there is an $N\in \N$ such that for all $n\ge N$,
  $A_n\subseteq V$, the neighborhood of $A$ guaranteed by Lemma \ref{lemma5.1}.
   Hence we must have $H(\bn{z}{n},t_n)\to H(\ba z,
   t)$ because the $V$ and $m$ defining $H$ for $(\bn{z}{n}, t_n)$
   is the same as the $V$ and $m$ defining $H$ for $(\ba z, t)$.

  Next suppose that $\ba z\in F$.  Then by Lemma \ref{zfoldpoint}
 we see that the arcs $A_n$ converge in the Hausdorff metric to
  the point $\ba z$.  Thus $H(\bn{z}{n},t_n)\to H(\ba z, t)=\ba z$.
  Thus $H$ is continuous.

  To see that each $H$ is an isotopy, fix some $t\in [0,1]$ and consider the function $h_t$
  given by $h_t(\ba x)=H(\ba x, t)$.  Suppose that $h_t(\ba
  x)=h_t(\ba y)$ for some $\ba x, \ba y\in K$.  Then by definition
  $\ba x$ and $\ba y$ are on the same composant, $C$, of $K$.  If
  $h(\ba x)=\ba x$ then $h_t(\ba y)=h_t(\ba x)=\ba x$ by definition.
  If  $\ba y\neq \ba x$, say $\ba y<\ba x$, then we have $\ba y<\ba
  x=h(\ba x)<h(\ba y)$ which contradicts the fact that $h$ is
  order-preserving.  So suppose, without loss of generality, that $\ba x<h(\ba
  x)$ and $\ba y<h(\ba y)$ (if one maps larger and the other maps
  smaller we quickly get a contradiction).  Then let $\ba
  a=\min\{\ba x, \ba y\}$ and let $\ba b=\max\{h(\ba x), h(\ba
  y)\}$.  Let $A$ be the arc with endpoints $\ba a$ and $\ba b$.
  Let $m$ be chosen so that $\pi_m|_{A}$ is a homeomorphism.  Then
  it is a simple algebra exercise to see that we must have
  $\pi_m(\ba x)=\pi_m(\ba y)$.  Since $\pi_k|_A$ is a homeomorphism
  for all $k\ge m$ this implies that $\ba x=\ba y$  Hence $h_t$ is
  one-to-one.  It is similarly easy to see that $h_t$ is a
  surjection.  Thus $H$ is an isotopy from $h$ to the identity.
\end{proof}

Since $h=\sh^{-a}\circ h'$, we have shown the following:

\begin{corollary}
  Let $h':\ilim{T}\to \ilim{T}$ be a homeomorphism.  Then there is
  an integer $a$ such that $h'$ is isotopic to $\sh^a$.
\end{corollary}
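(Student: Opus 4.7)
The plan is to derive this corollary from the preceding theorem by a one-line composition argument; essentially all the work has already been done in \cite{rainesstimac:nr} and in the theorem just proved. Given an arbitrary homeomorphism $h':\ilim{T}\to\ilim{T}$, I would first invoke the construction of \cite{rainesstimac:nr} (together with the normalization used throughout Section~2) to select an integer $a\in\Z$ such that the adjusted homeomorphism $h:=\sh^{-a}\circ h'$ satisfies the hypotheses of the preceding theorem---in particular, that $h$ fixes every folding point of $K$ and that its restriction to the $q$-points of $C_0$ matches the identity on the chain level. The existence of such an $a$ is precisely the content extracted from \cite{rainesstimac:nr} at the start of Section~2.

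I would then apply the preceding theorem to $h$ to obtain an isotopy $H:K\times[0,1]\to K$ with $H(\cdot,0)=h$ and $H(\cdot,1)=\mathrm{id}_K$, and transport $H$ through $\sh^a$ by setting
\[
G:K\times[0,1]\to K,\qquad G(\ba x,t)=\sh^a\bigl(H(\ba x,t)\bigr).
\]
Continuity of $G$ is immediate from continuity of $H$ together with the fact that $\sh^a$ is a homeomorphism of $K$; for each fixed $t$ the slice $G(\cdot,t)=\sh^a\circ H(\cdot,t)$ is a composition of homeomorphisms and is therefore itself a homeomorphism of $K$. Evaluating endpoints yields $G(\cdot,0)=\sh^a\circ\sh^{-a}\circ h'=h'$ and $G(\cdot,1)=\sh^a\circ\mathrm{id}_K=\sh^a$, so $G$ witnesses that $h'$ is isotopic to $\sh^a$, which is exactly the assertion of the corollary.

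I do not anticipate any genuine obstacle in the corollary itself: it reduces entirely to the identity $h'=\sh^a\circ h$ combined with the trivial observation that post-composition of an isotopy with a fixed homeomorphism is again an isotopy. The substantive content---producing the integer $a$, verifying that the normalized $h$ fixes every folding point, and constructing the isotopy $H$ between $h$ and $\mathrm{id}_K$---was precisely the work of \cite{rainesstimac:nr} and of the preceding theorem, so the proof of the corollary amounts to bookkeeping.
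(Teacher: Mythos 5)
Your argument is exactly the paper's: the paper's entire proof of the corollary is the remark ``Since $h=\sh^{-a}\circ h'$\ldots'', i.e.\ the homeomorphism $h$ treated throughout is the normalization $\sh^{-a}\circ h'$ of an arbitrary $h'$, and the theorem's isotopy from $h$ to the identity is post-composed with $\sh^{a}$ to give an isotopy from $h'$ to $\sh^{a}$. You simply spell out the bookkeeping (the choice of $a$ from \cite{rainesstimac:nr} and the map $G=\sh^{a}\circ H$) that the paper leaves implicit, so the proposal is correct and takes essentially the same route.
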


\bibliographystyle{plain}
\bibliography{rainesbib}

\end{document}